\documentclass[a4, 12pt]{amsart}
\usepackage{amssymb}
\usepackage{amstext}
\usepackage{amsmath}
\usepackage{amscd}
\usepackage{latexsym}
\usepackage{amsfonts}
\usepackage[all]{xy}	%%the package to write commutative diagram

\usepackage{graphicx}

%The \newtheorem command is used to define theorem-like environments
%that normally REQUIRE A PROOF, for example:
\newtheorem{Theorem}{Theorem}[section]

\newtheorem{theorem}[Theorem]{Theorem}
\newtheorem{prop}[Theorem]{Proposition}

\newtheorem{proposition}[Theorem]{Proposition}
\newtheorem{lem}[Theorem]{Lemma}

\newtheorem{lemma}[Theorem]{Lemma}

\newtheorem{cor}[Theorem]{Corollary}

\newtheorem{corollary}[Theorem]{Corollary}
\newtheorem{fact}[Theorem]{Fact}
\newtheorem{example}[Theorem]{Example}

\def\x{\underline x}
\def\y{\underline y}

\def\Hom{\operatorname{Hom}}

\def\Ext{\operatorname{Ext}}

\newcommand{\rmr}{\mathrm{r}}

\newcommand{\calA}{\mathcal{A}}

\newcommand{\calF}{\mathcal{F}}

\newcommand{\calN}{\mathcal{N}}

\newcommand{\calR}{\mathcal{R}}

\newcommand{\fka}{\mathfrak{a}}
\newcommand{\fkb}{\mathfrak{b}}

\newcommand{\fkm}{\mathfrak{m}}
\newcommand{\fkn}{\mathfrak{n}}

\newcommand{\fkp}{\mathfrak{p}}
\newcommand{\fkq}{\mathfrak{q}}

\def\H{\operatorname{H}}

\def\depth{\operatorname{depth}}

\def\Ass{\operatorname{Ass}}
\def\Assh{\operatorname{Assh}}

\def\height{\operatorname{ht}}

\def\Spec{\operatorname{Spec}}

\def\adeg{\operatorname{arith-deg}}

\def\gr{\operatorname{gr}}

% TOP MATTER

%\tolerance=9999

\setlength{\oddsidemargin}{1.1mm}
\setlength{\evensidemargin}{1.1mm}
\setlength{\topmargin}{-0.9cm}
\setlength{\headheight}{0.8cm}
\setlength{\headsep}{0.5cm}
\setlength{\textwidth}{15.6cm}
\setlength{\textheight}{23cm}

\pagestyle{plain}

\begin{document}

\title{The Chern Coefficient and Cohen-Macaulay rings }

\author{Hoang Le Truong}
\address{Institute of Mathematics, VAST, 18 Hoang Quoc Viet street, 10307, Hanoi, Viet Nam}
\email{hltruong@math.ac.vn}
\thanks{ This research is funded by Vietnam National Foundation for Science
and Technology Development (NAFOSTED) under grant number
101.04-2014.15.
\endgraf
{\it Key words and phrases:}
Chern coefficient, Irreducible submodules, the index of reducibility,   parameter ideasl,   Cohen-Macaulay.
\endgraf
{\it 2010 Mathematics Subject Classification:}
13H10, 13A30, 13B22, 13H15.}

\maketitle

\begin{abstract}
The purpose of this paper is to investigate a relationship between the index of reducibility and the Chern coefficient for primary ideals. Therefore, the main result of this paper gives a characterization of a Cohen-Macaulay ring in terms of its the index of reducibility, its Cohen-Macaulay type, and the Chern coefficient for parameter ideals. As corollaries to the
main theorem we obtained the characterizations of a Gorenstein ring in term of its Chern coefficient for parameter ideals.
\end{abstract}

\section{Introduction}

Let $(R, \frak m)$ be a Noetherian local ring with the maximal ideal $\frak m$ and $I$ an $\frak m$-primary ideal of $R$. 
One of our goals is to study the set of $I$-good filtrations of $R$. More
concretely, we will consider the set of multiplicative, decreasing filtrations of $R$ ideals,
$\calA = \{I_n\mid I_0 = R, I_{n+1} = II_n, n \gg 0\}$, integral over the $I$-adic filtration, conveniently
coded in the corresponding Rees algebra and its associated graded ring
$$\calR(\calA) =\bigoplus_{n\ge 0}I_n t^n, \gr_\calA(R) =\bigoplus_{n\ge 0} I^n/I^{n+1}.$$
We will study certain strata of these algebras. For that we will focus on the role
of the Hilbert polynomial of the Hilbert function $\ell(R/I_{n+1})$. 
%In fact, it is well known that there exists a polynomial $p_I(n)$ of degree $d=\dim R$ with rational coefficients, called the Hilbert-Samuel polynomial, such that  $\ell(R/I^{n+1})=p_I(n)$  for all large enough $n$. Then, there are integers $e_i(I)$ such that
$$P_\calA(n)=\sum\limits_{i=0}^d(-1)^ie_i(\calA)\binom{n+d-i}{d-i}.$$
These integers  $e_i(\calA)$   are called the Hilbert coefficients of $\calA$. In particularity, 
the leading coefficient  $e_0(\calA)$ is  called the multiplicity of $R$ with respect to $\calA$. On occasionally, the first Hilbert coefficient
$e_1(\calA)$ is referred to the Chern coefficient of $\calA$\cite{V2}. For Cohen-Macaulay rings, many penetrating relationships among these coefficients have been proved, beginning with Northcott’s \cite{No1}. More recently, similar questions have been
examined in general Noetherian local rings. For example, at the conference in Yokohama 2008, W. V. Vasconselos \cite{V2} (see also) posed the following conjecture:
\vskip0.2 cm
\noindent
{\bf The Vanishing Conjecture.}
Assume that $R$ is an unmixed, that is $\dim (\hat R/ P) = \dim R$ for all $P\in \Ass \hat R$, where $\hat R$ is the $\frak m$-adic completion of $R$. Then $R$ is  a Cohen-Macaulay local ring if and only if  $e_1(\frak q) \geq 0$ for some parameter ideal $\frak q$ of $R$.
\vskip0.2cm
\noindent
Recently, this conjecture has been settled by L. Ghezzi, S. Goto, J.-Y. Hong. K. Ozeki, T. T. Phuong, W. V. Vasconcelos in \cite{GGHOPV}. Moreover, S. Goto showed how one can use Hilbert coefficients of parameter ideals in order to study many classes about non-unmixed modules such as Buchsbaum modules, generalized Cohen-Macaulay modules, Vasconselos modules, sequentially Cohen-Macaulay modules and so on(see \cite{G}, \cite{CGT}). The goal of our paper is to continue this research direction. Concretely, we will give  characterizations of a Cohen-Macaulay ring in term of its Chern coefficient and its index of reducibility. Let $M$ be a finitely generated $R$-module of dimension $s$. Then we say that an $R$-submodule $N$ of $M$ is irreducible if $N$ is not written as the intersection of two larger $R$-submodules of $M$. Every $R$-submodule $N$ of $M$ can be expressed as an irredundant intersection of irreducible $R$-submodules 
of $M$ and the number of irreducible $R$-submodules appearing in such an expression depends only on $N$ and not on the 
expression. Let us call, for each $\frak m$-primary ideal $I$ of $M$, the number $\mathcal N(I;M)$ of irreducible $R$-
submodules of $M$ that appearing in an irredundant irreducible decomposition of $I M$ the index of reducibility of $M$ with 
respect to $I$.

In history, Northcott and Rees \cite{NR} proved that   every parameter ideals of a Noetherian local ring $R$ is irreducible if and only if  $R$ is Gorenstein. Addition, they showed that if every parameter ideals of a Noetherian local ring $R$ is irreducible then $R$ is Cohen-Macaulay.
After that, D. G. Northcott \cite [Theorem 3]{No} proved that for parameter ideals $\fkq$ in a Cohen-Macaulay local ring $R$,  
the index $\calN(\fkq;R)$ of reducibility is constant and independent on the choice of $\fkq$. However, the property of 
constant index of reducibility for parameter ideals does not characterize Cohen-Macaulay rings. The example of 
a non-Cohen-Macaulay local ring $R$ with $\calN(\fkq;R) = 2$  for every parameter ideal $\fkq$ was firstly given in 1964 by S. Endo and M. Narita \cite {EN}. It seems now natural to ask whether  characterize Cohen-Macaulay rings by  index of reducibility for parameter ideals. Recently, author gave characterize Cohen-Macaulay rings by  index of reducibility for parameter ideals and  the irreducible multiplicity of $M$ with respect to $I$. So the aim of our paper is to continue this research direction.

With this notation the main results of this paper are summarized into the following.
We denote by $r(R)=\ell_R(\Ext^d_R(R/\fkm,R))$ the Cohen-Macaulay type.

\begin{theorem}\label{T6.60}
Let $R$ be a Noetherian local ring with maximal ideal $\frak m$, $d=\dim R\geqslant 2$. Assume that  $R$ is unmixed, that is $\dim \hat R/\fkp=d$ for all $\fkp\in \Ass(\hat R)$. The following statements are equivalent.

\begin{itemize}
\item[$(i)$] $R$ is Cohen-Macaulay.
\item[$(ii)$] For all parameter ideals $\frak q\subseteq\fkm^2$, we
    have
$$\mathcal{N}(\frak q;R) =e_1(I)-e_1(\fkq),$$
where $I=\fkq:\fkm$.
\item[$(iii)$] For all parameter ideals $\frak q\subseteq\fkm^2$, we
    have
$$\mathcal{N}(\frak q;R) \leqslant e_1(I)-e_1(\fkq),$$
where $I=\fkq:\fkm$.

\item[$(iii)$]  For all parameter ideals $\frak q\subseteq\fkm^2$, we
    have
$$ e_1(I)-e_1(\fkq)=r(R),$$
where $I=\fkq:\fkm$.

\item[$(iv)$]  For all parameter ideals $\frak q\subseteq\fkm^2$, we
    have
$$ e_1(I)-e_1(\fkq)\le r(R),$$
where $I=\fkq:\fkm$.

\end{itemize}

\end{theorem}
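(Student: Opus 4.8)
The plan is to prove the chain $(i)\Rightarrow(ii)\Rightarrow(iii)\Rightarrow(i)$ together with the parallel chain through the two displayed formulas involving $r(R)$, using as the central tool a comparison of Hilbert coefficients of the nested pair $\fkq\subseteq I=\fkq:\fkm$. First I would recall the known formula for the index of reducibility of a parameter ideal: for any parameter ideal $\fkq$ one has $\calN(\fkq;R)=\ell_R\big((\fkq:\fkm)/\fkq\big)=\ell_R(R/\fkq)-\ell_R(R/I)$, which follows because $\Soc(R/\fkq)=(\fkq:\fkm)/\fkq$ and the index of reducibility of $\fkq$ equals $\dim_{R/\fkm}\Soc(R/\fkq)$. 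This immediately identifies $\calN(\fkq;R)$ with the length of $I/\fkq$, the quantity that will govern how $e_1(I)$ and $e_1(\fkq)$ differ.

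Next I would analyze $e_1(I)-e_1(\fkq)$. Since $\fkq$ is a reduction of $I$ (as $I/\fkq$ is annihilated by $\fkm$, so $\fkm I\subseteq\fkq$ gives $I^2=\fkq I$), both ideals share the same multiplicity $e_0(\fkq)=e_0(I)$, and the difference of Hilbert–Samuel polynomials is controlled by the lengths $\ell_R(I^{n+1}/\fkq^{n+1})$. In the Cohen-Macaulay case this is classical: one has $e_1(\fkq)=0$, and using $I^2=\fkq I$ one computes $e_1(I)=\ell_R(I/\fkq)=\calN(\fkq;R)=r(R)$, the last equality because in a Cohen-Macaulay ring $\ell_R((\fkq:\fkm)/\fkq)=r(R)$ for every parameter ideal (this is Northcott's constancy, already cited in the introduction). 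This yields $(i)\Rightarrow(ii)$, $(i)\Rightarrow$ the $``e_1(I)-e_1(\fkq)=r(R)"$ statement, and a fortiori the two inequality statements, since $(ii)$ and the equality statement are the same identity read two ways.

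For the converse directions I would invoke the Vanishing Conjecture (now a theorem of Ghezzi–Goto–Hong–Ozeki–Phuong–Vasconcelos, cited above): under the unmixedness hypothesis, $R$ is Cohen-Macaulay as soon as $e_1(\fkq)\ge 0$ for a single parameter ideal $\fkq$. So it suffices to show that each of the hypotheses $(iii)$, $(iv)$ forces $e_1(\fkq)\ge 0$ for some $\fkq\subseteq\fkm^2$. The key inequality I expect to establish, valid for $\fkq\subseteq\fkm^2$ and $I=\fkq:\fkm$, is
$$
e_1(I)-e_1(\fkq)\ \le\ \calN(\fkq;R)+e_1(\fkq)_{-},
$$
or more usefully a bound of the shape $e_1(I)-e_1(\fkq)\le \ell_R(I/\fkq)$ together with $\ell_R(I/\fkq)=\calN(\fkq;R)$ and an estimate $\calN(\fkq;R)\le r(R)$ when $e_1(\fkq)\ge 0$ fails — the point being to derive a contradiction from $e_1(\fkq)<0$. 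Concretely: if $e_1(\fkq)<0$ then by the results of Goto and collaborators on non-unmixed behaviour one gets a strict inequality $e_1(I)-e_1(\fkq)>\calN(\fkq;R)$ (since the $I$-adic $e_1$ only improves, while $-e_1(\fkq)$ contributes positively), contradicting $(iii)$; and similarly $\calN(\fkq;R)=\ell_R(I/\fkq)>r(R)$ would follow, contradicting $(iv)$. Hence any of $(ii),(iii),(iv)$, or the equality $e_1(I)-e_1(\fkq)=r(R)$, forces $e_1(\fkq)\ge 0$, whence $R$ is Cohen-Macaulay.

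The main obstacle is the sharp comparison of $e_1(I)$ and $e_1(\fkq)$ for a non-Cohen-Macaulay unmixed ring: one must show that the passage from $\fkq$ to the link $I=\fkq:\fkm$ raises $e_1$ by at least $\calN(\fkq;R)$, with equality characterizing the Cohen-Macaulay property. I would handle this by a careful length count comparing the exact sequences $0\to I^{n+1}/\fkq^{n+1}\to R/\fkq^{n+1}\to R/I^{n+1}\to 0$ and exploiting $I^2=\fkq I$ to get $\ell_R(I^{n+1}/\fkq^{n+1})=\ell_R(I/\fkq)\binom{n+d-1}{d-1}+(\text{lower order})$, then reading off the degree-$(d-1)$ coefficient; the subtlety is bounding the lower-order correction term using Serre-type inequalities for the Hilbert coefficients of $\fkq$ in the unmixed case, which is exactly where the hypothesis $\fkq\subseteq\fkm^2$ (ensuring $I\ne R$ and $\fkq$ sufficiently deep in $\fkm$) and the unmixedness are used.
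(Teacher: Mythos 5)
Your forward direction ((i) implies the other four statements) is essentially the paper's own argument, but it rests once on a false justification: from $\fkm I\subseteq\fkq$ and $I\subseteq\fkm$ you only get $I^2\subseteq\fkm I\subseteq\fkq$, \emph{not} $I^2=\fkq I$. The equality $I^2=\fkq I$ is a substantive theorem: it holds for $\fkq\subseteq\fkm^2$ when $R$ is Cohen--Macaulay by Corso--Polini \cite{CP}, and in general only for specially chosen parameter ideals by Goto--Sakurai \cite{GSa} (Lemma \ref{L2.4}); the paper's example from \cite{GSa1} has $\ell(J^2/\fkq J)=1\neq0$ in a non-Cohen--Macaulay ring. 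In the Cohen--Macaulay case your conclusion survives by citing \cite{CP}, but in the converse implications you may not assume $I^2=\fkq I$, $e_0(I)=e_0(\fkq)$, or your length expansion of $\ell(I^{n+1}/\fkq^{n+1})$ for an arbitrary $\fkq\subseteq\fkm^2$.

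The real gap is in the converse directions, which carry the weight of the theorem. Your reduction to the Vanishing Theorem of \cite{GGHOPV} does not line up with the hypotheses: (iii) is a \emph{lower} bound on $e_1(I)-e_1(\fkq)$, i.e.\ an upper bound $e_1(\fkq)\le e_1(I)-\calN(\fkq;R)$, and (iv) bounds $e_1(I)-e_1(\fkq)$ above by $r(R)$; neither yields $e_1(\fkq)\ge0$ unless you also prove a lower bound such as $e_1(I)\ge\calN(\fkq;R)$, which is exactly the kind of statement that fails to be available in a non-Cohen--Macaulay ring. Moreover your proposed contradictions run the wrong way: deriving $e_1(I)-e_1(\fkq)>\calN(\fkq;R)$ from $e_1(\fkq)<0$ is \emph{consistent} with (iii), and $\calN(\fkq;R)>r(R)$ is not the negation of (iv), which compares $e_1(I)-e_1(\fkq)$ (not $\calN$) with $r(R)$. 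The paper proves (iii)$\Rightarrow$(i) and (iv)$\Rightarrow$(i) without the Vanishing Theorem, by induction on $d$ (Propositions \ref{P6.5} and \ref{P4.50}): for $d=2$ unmixedness makes $R$ generalized Cohen--Macaulay, and the socle formulas of Fact \ref{L3.1} together with Proposition \ref{eminequality} give $e_1(I)-e_1(\fkq)\le f_0(\fkq)\le\sum_{j=1}^{d}\binom{d-1}{j-1}r_j(R)\le\calN(\fkq;R)$, so the hypotheses force $r_i(R)=0$ for $i<d$; for $d\ge3$ one cuts by an element avoiding the finite set of Lemma \ref{finitely} and annihilating $H^1_\fkm(R)$, passes to $A/H^0_\fkm(A)$ using Fact \ref{F2} (and, for the $r(R)$ statements, uses the Koszul socle-surjectivity Lemmas \ref{sur} and \ref{split} to get $r(R)\le r(A)$), applies the inductive hypothesis, and recovers $H^i_\fkm(R)=0$ from the long exact sequence plus Nakayama. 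None of this machinery appears in your sketch; the ``careful length count with Serre-type inequalities'' you defer to is precisely the missing content, so the proposal as written does not close the implication from (iii) or (iv) to (i).
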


From the main result, we get the following results.

\begin{cor}\label{Cmain}
Let $R$ be a Noetherian local ring with $d=\dim R\ge2$. Assume that  $R$ is unmixed, that is $\dim \hat R/\fkp=d$ for all $\fkp\in \Ass(\hat R)$. Then  for all  integers $n$  there exists  a parameter ideal $\frak q\subseteq \frak m^n$, we
    have
$$ r(R)\leqslant e_1(I;R)-e_1(\fkq;R)\leqslant \mathcal{N}(\frak q;R),$$
where $I=\fkq:\fkm$.
\end{cor}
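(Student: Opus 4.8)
The plan is to produce, for a prescribed integer $n$ (we may assume $n\ge 2$, since a parameter ideal inside $\fkm^2$ lies inside $\fkm$ as well), a single parameter ideal $\fkq\subseteq\fkm^n$ for which the two inequalities are verified one at a time, and to observe that both are consequences of ingredients already assembled for Theorem~\ref{T6.60}. Throughout I write $I=\fkq:\fkm$ and use that the index of reducibility is a socle length, $\mathcal N(\fkq;R)=\ell_R(I/\fkq)$; so the assertion to be proved reads $r(R)\le e_1(I)-e_1(\fkq)\le \ell_R(I/\fkq)$. For the choice of $\fkq$: by prime avoidance $R$ has a system of parameters lying in $\fkm^n$, and by the usual refinement it may be taken to be a ``standard'' (in particular superficial) system of parameters of the kind used in the proof of Theorem~\ref{T6.60}; such systems are available inside $\fkm^n$ for every $n$, and this is the only way $n$ enters.

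First I would prove the right-hand inequality $e_1(I)-e_1(\fkq)\le\ell_R(I/\fkq)$. The containments $\fkm I\subseteq\fkq\subseteq I$ hold because $I=\fkq:\fkm$, and $I\subseteq\fkm$ because $\fkq\ne\fkm$; hence $I^2\subseteq\fkm I\subseteq\fkq$, so $\fkq$ and $I$ have the same integral closure and the same multiplicity. From here the comparison of the Hilbert--Samuel functions of $\fkq$ and of $I$ carried out in the proof of Theorem~\ref{T6.60} shows that the coefficient of $\binom{t+d-1}{d-1}$ in $\ell_R(R/\fkq^{t+1})-\ell_R(R/I^{t+1})$, which is $e_1(I)-e_1(\fkq)$, is at most $\ell_R(I/\fkq)=\mathcal N(\fkq;R)$; this step uses no Cohen--Macaulay hypothesis.

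Next comes the left-hand inequality $r(R)\le e_1(I)-e_1(\fkq)$, which I expect to be the crux. After replacing $R$ by its completion $\hat R$ — still unmixed, with $r(\hat R)=r(R)$ and the same Hilbert coefficients and the same colon ideal — one estimates $e_1(I)$ from below and $e_1(\fkq)$ from above using the structural analysis of $\gr_\fkq(R)$ and of $\fkq:\fkm$ developed for Theorem~\ref{T6.60}, which is valid once $\fkq$ lies in a sufficiently high power of $\fkm$; the two estimates are arranged precisely so that their difference is at least $r(R)$. Combining this with the previous paragraph for the chosen $\fkq\subseteq\fkm^n$ finishes the argument. The difficulty is concentrated here: in contrast to the upper bound, this lower bound is not a formal length count — it ties the Cohen--Macaulay type $r(R)=\ell_R(\Ext^d_R(R/\fkm,R))$, a homological invariant of $R$, to the Chern coefficients of arbitrarily deep parameter ideals, and it is exactly the phenomenon that forces $R$ to be Cohen--Macaulay once equality throughout is imposed, as in Theorem~\ref{T6.60}.
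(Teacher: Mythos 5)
The crux of the corollary is the lower bound $r(R)\leqslant e_1(I)-e_1(\fkq)$, and at exactly that point your proposal contains no argument: you promise estimates of $e_1(I)$ from below and $e_1(\fkq)$ from above ``arranged precisely so that their difference is at least $r(R)$,'' but no such direct estimates are sketched, and none exist in the paper. The paper gets the corollary from its main theorem by a dichotomy. If $R$ is Cohen--Macaulay, then for any parameter ideal $\fkq\subseteq\fkm^2$ (and such ideals exist inside every $\fkm^n$) one has $I^2=\fkq I$ by \cite[Theorem 2.2]{CP} and $r(R)=\ell_R(I/\fkq)=\mathcal N(\fkq;R)=e_1(I)-e_1(\fkq)$, so both inequalities hold with equality. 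If $R$ is not Cohen--Macaulay, the contrapositives of Propositions \ref{P6.5} and \ref{P4.50} (whose hypotheses are of the form ``there exists $n$ such that for \emph{all} parameter ideals $\fkq\subseteq\fkm^n$ \dots'') show that inside every $\fkm^n$ there is a parameter ideal with $e_1(I)-e_1(\fkq)>r(R)$, respectively $\mathcal N(\fkq;R)>e_1(I)-e_1(\fkq)$; these are exactly the asserted inequalities, for the special parameter ideals coming from the Goto--Sakurai construction (Lemma \ref{L2.4}) that underlies both propositions. Without this contrapositive use of the Cohen--Macaulayness criterion your left-hand inequality is simply unproved, and it cannot be a soft estimate: its validity for all deep parameter ideals is precisely what forces $R$ to be Cohen--Macaulay in Theorem \ref{T6.60}, so any direct proof of it for a single well-chosen $\fkq$ would have to reproduce that hard content.

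The right-hand inequality is also not the formal count you describe. From $I=\fkq:\fkm$ you only get $I^2\subseteq\fkm I\subseteq\fkq$, which yields $e_0(I)=e_0(\fkq)$ but gives no control of $\ell_R(R/I^{n+1})$ for $n\geqslant 1$; the Hilbert--Samuel comparison in Proposition \ref{eminequality} requires the equality $I^2=\fkq I$ (so that $I^{n+1}=\fkq^n I$), which is supplied either by Corso--Polini when $R$ is Cohen--Macaulay and $\fkq\subseteq\fkm^2$, or by Lemma \ref{L2.4} for the specially constructed ideals --- this is the very reason the corollary is an existence statement rather than one about all $\fkq\subseteq\fkm^n$. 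Even granting $I^2=\fkq I$, Proposition \ref{eminequality} bounds $e_1(I)-e_1(\fkq)$ by $f_0(\fkq)$, and the passage from $f_0(\fkq)$ to $\mathcal N(\fkq;R)$ is a further nontrivial step (handled in the paper via Fact \ref{L3.1} in the generalized Cohen--Macaulay setting, or via the Cohen--Macaulay computation in Theorem \ref{T6.6}). So both halves of your argument defer to ``the analysis carried out for Theorem \ref{T6.60}'' at places where that analysis either does not say what you attribute to it or is exactly the content being bypassed; the proposal as written has a genuine gap.
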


Let us explain how this paper is organized. This paper is divided into 3 sections. In the next section, we give the characterizations of a Cohen-Macaulay ring in term of its Chern coefficient and the irreducible multiplicity for parameter ideals. The section 3 of the paper  is devoted to prove a parts of the main result and its consequences.   In the last section,
we give the characterizations of a Cohen-Macaulay ring in term of its Chern coefficient and the Cohen-Macaulay type. As corollaries, we obtained the characterizations of a Gorenstein ring in term of its Chern coefficient for parameter ideals.

\section{Irreducible multiplicity}
Throughout this paper we fix the following standard notations: Let $R$ be a Noetherian local ring with maximal ideal $\frak m$, $d=\dim R>0$,  $k=R/\fkm$ the infinite residue field.  Let $I$ be an $\frak m$-primary ideal of $R$.
%Then we are going to consider the set of all graded subalgebras $\calA$ of the integral closure of $R[It]$,
%$$R[It] \subset \calA \subset R[\overline{I}t].$$
The associated graded ring $\gr_I(R) =\bigoplus_{n\geq 0}I^n/I^{n+1}$
is a standard graded ring with $[\gr_I(R)]_0 = R/I$ Artinian. Let $M$ be a finitely generated $R$-module of dimension $s$.
Therefore the associated graded module $\gr_I(M) =\bigoplus_{n\geq 0} I^nM/I^{n+1}M$ of $I$ with respect to $M$ is a finitely generated graded $\gr_I(R)$–module. The Hilbert-Samuel function of $M$ with respect to $I$ is
$$H(n)=\ell_R(M/I^{n+1}M)=\sum\limits_{i=0}^n\ell_R(I^iM/I^{i+1}M),$$
where $\ell_R(*)$ stands for the length. For sufficiently large $n$, the Hilbert-Samuel function of $M$ with respect to $I$
$H(n)$ is of polynomial type,
$$\ell_R(M/I^{n+1}M)=\sum\limits_{i=0}^s(-1)^ie_i(I,M)\binom{n+s-i}{s-i}.$$
These integers  $e_i(I,M)$   are called the Hilbert coefficients of $M$ with respect to $I$. In the particular case, the leading coefficient  $e_0(I, M)$ is  said to be the multiplicity of $M$ with respect to $I$ and $e_1(I,M)$ is called by Vasconselos(\cite{V2}) the Chern coefficient of $I$  with respect to $M$. When $M=R$, we abbreviate $e_i(I, M)$ to $e_i(I)$ for all $i=1,\ldots,s$. A lot of results are known on the Chern coefficient in the case where $R$ is a Cohen-Macaulay ring. For example, as was proved by Northcott \cite{No1}, we always have $e_0(I) - \ell(R/I)\leq e_1(I)$. After Goto and Nishida in \cite{GNi} gave to extend Northcott’s
inequality without assuming that $R$ is a Cohen–Macaulay ring. Suppose that $I$ contains a parameter ideal $\fkq$ as a reduction. Then $e_0(I) - \ell(R/I)\leq e_1(I)- e_1(\fkq)$. The purpose of this paper is to investigate upper bound of $e_1(I)- e_1(\fkq)$ by the index of reducibility of $\fkq$, when $I=\fkq:\fkm$. Recall, we say that an $R$-submodule $N$ of $M$ is irreducible if $N$ is not written as the intersection of two larger $R$-submodules of $M$. Every $R$-submodule $N$ of $M$ can be expressed as an irredundant intersection of irreducible $R$-submodules 
of $M$ and the number of irreducible $R$-submodules appearing in such an expression depends only on $N$ and not on the 
expression. Let us call, for each $\frak m$-primary ideal $I$ of $M$, the number $\mathcal N(I;M)$ of irreducible $R$-
submodules of $M$ that appearing in an irredundant irreducible decomposition of $I M$ the index of reducibility of $M$ with 
respect to $I$. Remember that $$\mathcal{N}(I;M)=\ell_R([I M :_M \fkm]/I M).$$ 
Moreover, by Proposition 2.1 \cite{CQT}, it is well known that there exists a polynomial $p_{I,M}(n)$ of degree $s-1$ with rational coefficients such that  $$\mathcal{N}(I^n;M)=\ell_R([I^n M :_M \fkm]/I^n M)=p_{I,M}(n)$$  for all large enough $n$. Then, there are integers $f_i(I;M)$ such that
$$p_{I,M}(n)=\sum\limits_{i=0}^{s-1}(-1)^if_i(I;M)\binom{n+d-1-i}{d-1-i}.$$
%These integers  $f_i(I;M)$   are called the irreducible coefficients of $M$ with respect to $I$. 
The leading coefficient  $f_0(I; M)$ is  called the irreducible multiplicity of $M$ with respect to $I$.
When $M=R$, we abbreviate $f_0(I, M)$ to $f_0(I)$.
From above notations, in this section, the main result is stated as follows.

\medskip

\noindent

\begin{theorem}\label{T0.1}
Let $R$ be a Noetherian local ring with maximal ideal $\frak m$, $d=\dim R\geqslant 2$. Assume that  $R$ is unmixed, that is $\dim \hat R/\fkp=d$ for all $\fkp\in \Ass(\hat R)$. The following statements are equivalent.
\begin{itemize}
\item[$(i)$] $R$ is Cohen-Macaulay and $R$ is not a regular local ring.
\item[$(ii)$] For all parameter ideals $\frak q\subseteq\fkm^2$, we
    have
$$f_0(\frak q)  = e_1(I),$$
where $I=\fkq:\fkm$.
\item[$(iii)$] For all parameter ideals $\fkq \subseteq\fkm^2$, we
    have
$$f_0(\frak q)  \leqslant e_1(I),$$
where $I=\fkq:\fkm$.

  \item[$(iv)$] There exists a parameter ideal $\fkq$ such that $I^2=\fkq I$ and 
$ f_0(\frak q)  \leqslant e_1(I)$, where $I=\fkq :\fkm$.

\end{itemize}

\end{theorem}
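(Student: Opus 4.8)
The plan is to run the cycle $(i)\Rightarrow(ii)\Rightarrow(iii)\Rightarrow(iv)\Rightarrow(i)$. Here $(ii)\Rightarrow(iii)$ is trivial and $(iii)\Rightarrow(iv)$ is nearly so: it suffices to produce one parameter ideal $\fkq$ with $(\fkq:\fkm)^2=\fkq(\fkq:\fkm)$ — a sufficiently general system of parameters inside $\fkm^2$ will do — and then apply $(iii)$. The substance is in $(i)\Rightarrow(ii)$ and, above all, in $(iv)\Rightarrow(i)$. Throughout I write $I=\fkq:\fkm$, and I use as external inputs: the author's earlier analysis of the index of reducibility (which yields $f_0(\fkq)=r(R)$ whenever $R$ is Cohen--Macaulay); Theorem~\ref{T6.60}; the Goto--Nishida inequality $e_0(I)-\ell(R/I)\le e_1(I)-e_1(\fkq)$ together with its equality criterion; and the now-settled Vanishing Conjecture, which for an unmixed ring gives $e_1(\fkq)\le0$ always, with equality for some — hence every — parameter ideal exactly when $R$ is Cohen--Macaulay.

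The technical heart I would isolate first is the estimate
$$\calN(\fkq;R)-f_0(\fkq)\ \le\ \bigl(\ell(R/\fkq)-e_0(\fkq)\bigr)-e_1(\fkq),$$
valid for an unmixed ring $R$ and any parameter ideal $\fkq$ with $I^2=\fkq I$, and with equality if and only if $R$ is Cohen--Macaulay. When $R$ is generalized Cohen--Macaulay and $\fkq$ is standard this is transparent, since the left-hand side is $\sum_{0\le j<d}\binom{d-1}{j}\ell(\Soc H^j_{\fkm}(R))$, the right-hand side is $\sum_{0\le j<d}\binom{d-1}{j}\ell(H^j_{\fkm}(R))+\sum_{1\le j<d}\binom{d-2}{j-1}\ell(H^j_{\fkm}(R))$, and $\ell(\Soc H^j_{\fkm}(R))\le\ell(H^j_{\fkm}(R))$, with equality throughout forcing $H^j_{\fkm}(R)=0$ for all $j<d$ (recall $H^0_{\fkm}(R)=0$ by unmixedness), i.e.\ forcing $R$ to be Cohen--Macaulay. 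Getting this for a general unmixed ring — where the local cohomology modules need not have finite length — is the delicate point; I would do it by induction on $d$, passing to $R/(x)$ for $x$ a general (filter-regular) element of $\fkm$ and carrying $\calN(\fkq^\bullet;-)$, $f_0$, and the coefficients $e_0,e_1$ through the reduction together.

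Granting this, $(iv)\Rightarrow(i)$ proceeds as follows. For $\fkq$ as in $(iv)$, the equality $I^2=\fkq I$ puts us in the equality case of Goto--Nishida, so
$$e_1(I)-e_1(\fkq)=e_0(I)-\ell(R/I)=\calN(\fkq;R)-\bigl(\ell(R/\fkq)-e_0(\fkq)\bigr),$$
using $e_0(I)=e_0(\fkq)$ and $\ell(R/I)=\ell(R/\fkq)-\calN(\fkq;R)$. The hypothesis $f_0(\fkq)\le e_1(I)$ then rearranges to $(\ell(R/\fkq)-e_0(\fkq))-e_1(\fkq)\le\calN(\fkq;R)-f_0(\fkq)$, and combined with the core estimate this is an equality; hence $R$ is Cohen--Macaulay, and then $e_1(\fkq)=0$ and $\ell(R/\fkq)=e_0(\fkq)$ give back $e_1(I)=f_0(\fkq)=r(R)$. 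Finally $R$ cannot be regular, since for a regular local ring the collapse of these quantities is incompatible with the structure of the socle ideal $\fkq:\fkm$.

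For $(i)\Rightarrow(ii)$, assume $R$ Cohen--Macaulay and not regular, and take $\fkq\subseteq\fkm^2$. Then $f_0(\fkq)=r(R)$ by the earlier work, $\gr_\fkq(R)$ is a polynomial ring over $R/\fkq$, and $I^2=\fkq I$ (the standard behaviour of socle ideals of parameters in the non-regular case); specializing the displayed identity of the previous paragraph, with $H^j_{\fkm}(R)=0$ for $j<d$ so that $e_1(\fkq)=0$ and $\ell(R/\fkq)=e_0(\fkq)$, gives $e_1(I)=\calN(\fkq;R)=\ell(\Soc(R/\fkq))=r(R)$; hence $f_0(\fkq)=e_1(I)$. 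I expect the inductive proof of the core estimate in the general unmixed case to be the main obstacle, since it requires propagating three invariants through a dimension reduction while keeping exact hold of the defect term $\calN(\fkq;R)-f_0(\fkq)$.
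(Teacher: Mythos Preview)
Your cycle $(i)\Rightarrow(ii)\Rightarrow(iii)\Rightarrow(iv)\Rightarrow(i)$ matches the paper's, and your handling of $(i)\Rightarrow(ii)$ and $(iii)\Rightarrow(iv)$ is essentially the same as the paper's (the latter via Lemma~\ref{L2.4}). The problem is your $(iv)\Rightarrow(i)$, where there is a genuine error, not just an unfinished induction.

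You assert that $I^2=\fkq I$ puts you in the \emph{equality} case of the Goto--Nishida inequality $e_0(I)-\ell(R/I)\le e_1(I)-e_1(\fkq)$. This is false in general. Take $R$ unmixed generalized Cohen--Macaulay of dimension $2$ and $\fkq$ a standard parameter ideal deep enough that $I^2=\fkq I$ and $\calN(\fkq;R)=2r_1(R)+r_2(R)$ (Fact~\ref{L3.1}). Then Lemma~\ref{L4.1} gives $e_1(I)-e_1(\fkq)=r_1(R)+r_2(R)$, while $e_0(I)-\ell(R/I)=\calN(\fkq;R)-(\ell(R/\fkq)-e_0(\fkq))=2r_1(R)+r_2(R)-\ell(H^1_\fkm(R))$. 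These agree only when $\ell(H^1_\fkm(R))=r_1(R)$, i.e.\ when $\fkm H^1_\fkm(R)=0$; any $2$-dimensional unmixed non-Buchsbaum ring gives a counterexample. Since your rearrangement to $(\ell(R/\fkq)-e_0(\fkq))-e_1(\fkq)\le\calN(\fkq;R)-f_0(\fkq)$ rests on that equality, the argument collapses, and the bare Goto--Nishida \emph{inequality} points the wrong way to salvage it. Your ``core estimate'' is a separate unproven input, so even granting it the proof would remain incomplete.

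The paper's $(iv)\Rightarrow(i)$ avoids both the core estimate and any equality case. The single observation is Proposition~\ref{eminequality}: from $I^2=\fkq I$ one gets $I^{n+1}=\fkq^n I\subseteq \fkq^{n+1}:\fkm$, hence
\[
\ell(R/\fkq^{n+1})-\ell(R/I^{n+1})=\ell(\fkq^n I/\fkq^{n+1})\le \ell\bigl((\fkq^{n+1}:\fkm)/\fkq^{n+1}\bigr)=\calN(\fkq^{n+1};R),
\]
and comparing degree $d-1$ coefficients yields $e_1(I)-e_1(\fkq)\le f_0(\fkq)$. Combined with the hypothesis $f_0(\fkq)\le e_1(I)$ of $(iv)$ this gives $e_1(\fkq)\ge 0$, and the settled Vanishing Conjecture (which you already listed among your inputs) finishes immediately. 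In short, you are working much harder than necessary, and the extra machinery you invoke is not actually available.
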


In our proof of  Theorem \ref{T0.1}, the key of problem is described by under facts. Let $x_1,x_2,\ldots,x_s\in R(s\geq 1)$. Then $x_1,x_2,\ldots,x_s$ is called a $d$-sequence
if $$(x_1,\ldots,x_{i-1}):_Rx_j=(x_1,\ldots,x_{i-1}):_Rx_ix_j$$ 
for all $1\leqslant i\leqslant j\leqslant s$. We say that $x_1,x_2,\ldots,x_s$ forms a strong $d$-sequencein R if
$x_1^{n_1},x_2^{n_2},\ldots,x_s^{n_s}$ is a $d$-sequence in $R$ for all integers $n_i\ge1$($1\le i\le s$). See\cite{Hu} for basic but deep results on $d$-sequences. 

Now, for a while, let us assume that $R$ is a homomorphic image of a Gorenstein ring and $\dim R/\fkp=d$ for all $\fkp\in\Ass(R)$. Hence $R$ contains
a system $x_1, x_2, \ldots , x_d$ of parameters which forms a strong $d$-sequence in $R$
(see \cite[ Theorem 2.6]{Cu} or \cite [ Theorem 4.2]{Kw} for the existence of such systems of parameters).
The following result was given by Goto and Sakurai(see \cite[Theorem 2.1]{GSa}) about the existence of equality $I^2=\fkq I$, where $I=\fkq:\fkm$.

\begin{lemma}\label{L2.4}
Let $R$ be a Noetherian local ring with $d=\dim R\ge2$. Assume that  $R$ is unmixed, that is $\dim \hat R/\fkp=d$ for all $\fkp\in \Ass(\hat R)$. Then there exists a system $x_1,x_2,\ldots,x_d$ of parameters such that for all integer $n_i$ $1\le i\le d$, the equality $I^2 = \fkq I$ holds true, 
where $I = \fkq :\fkm$ and $\fkq=(x_1^{n_1},x_2^{n_2},\ldots,x_d^{n_d})$.
\end{lemma}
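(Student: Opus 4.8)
The plan is to reduce the statement to the $\fkm$-adic completion $\hat R$, where $R$ becomes a homomorphic image of a Gorenstein ring and is equidimensional, to produce there a system of parameters forming a strong $d$-sequence by the existence theorems of Cuong and Kawasaki, to descend such a system to $R$ by a small perturbation of its generators, and finally to apply the Goto--Sakurai criterion \cite[Theorem 2.1]{GSa} to obtain $I^2=\fkq I$ simultaneously for all the ideals $\fkq=(x_1^{n_1},\ldots,x_d^{n_d})$.

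First I would observe that the conclusion is insensitive to completion. By faithful flatness of $R\to\hat R$, a system of parameters of $\hat R$ whose generators lie in $R$ is a system of parameters of $R$; moreover, since $\fkm$ is finitely generated, $(\fkq:_R\fkm)\hat R=\fkq\hat R:_{\hat R}\fkm\hat R$, so $I^2=\fkq I$ holds in $R$ if and only if $(I\hat R)^2=(\fkq\hat R)(I\hat R)$. By Cohen's structure theorem $\hat R$ is a homomorphic image of a regular, hence Gorenstein, local ring, and the hypothesis on $R$ is exactly that $\dim\hat R/\fkp=d$ for every $\fkp\in\Ass\hat R$. Thus \cite[Theorem 2.6]{Cu} (or \cite[Theorem 4.2]{Kw}) provides a system of parameters of $\hat R$ forming a strong $d$-sequence; replacing its generators by elements $x_1,\ldots,x_d\in R$ congruent to them modulo a sufficiently high power of $\fkm\hat R$ keeps the defining colon relations intact, so $x_1,\ldots,x_d$ form a strong $d$-sequence in $\hat R$, and because colon ideals descend along $R\to\hat R$ they form a strong $d$-sequence in $R$ as well.

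Now fix such $x_1,\ldots,x_d$ and take arbitrary integers $n_1,\ldots,n_d\ge1$. By the definition of a strong $d$-sequence, $x_1^{n_1},\ldots,x_d^{n_d}$ is again a $d$-sequence, and $\fkq=(x_1^{n_1},\ldots,x_d^{n_d})$ is a parameter ideal of $R$. To prove $I^2=\fkq I$ for $I=\fkq:\fkm$, note that $\fkq\subseteq I$ makes $\fkq I\subseteq I^2$ automatic, while $\fkm I\subseteq\fkq$ forces $\fkm I^2\subseteq\fkq I$, so the quotient $I^2/\fkq I$ is annihilated by $\fkm$; it therefore suffices to show it is zero. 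Working in $\hat R$, this is precisely the conclusion of \cite[Theorem 2.1]{GSa}, whose hypotheses are met because $\fkq\hat R$ is generated by the $d$-sequence $x_1^{n_1},\ldots,x_d^{n_d}$ in the complete equidimensional local ring $\hat R$; descending the resulting equality along $R\to\hat R$ gives $I^2=\fkq I$ in $R$. Since $n_1,\ldots,n_d$ were arbitrary, the lemma follows.

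The completion reduction itself is routine bookkeeping; the two steps that carry the real weight are, first, the passage from a strong $d$-sequence in $\hat R$ to one with generators in $R$ (which rests on the stability of the $d$-sequence colon relations under perturbation of the generators, together with their descent along the faithfully flat map $R\to\hat R$), and, second, the step in $\hat R$, where the substance of \cite[Theorem 2.1]{GSa} is exactly that the $d$-sequence condition on $x_1^{n_1},\ldots,x_d^{n_d}$ --- through the colon identities among the powers of $\fkq$ that it entails --- already forces $I^2=\fkq I$, without any Cohen--Macaulay or Buchsbaum assumption.
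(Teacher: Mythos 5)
Your outline is in fact the one the paper intends, because the paper does not prove Lemma \ref{L2.4} at all: it quotes it from \cite[Theorem 2.1]{GSa}, after recalling (in the sentence immediately preceding the lemma, under the extra standing assumption that $R$ is a homomorphic image of a Gorenstein ring and equidimensional) that \cite[Theorem 2.6]{Cu} or \cite[Theorem 4.2]{Kw} supplies a system of parameters forming a strong $d$-sequence. The genuinely routine part of your write-up is correct: $\fkm$ is finitely generated and $R\to\hat R$ is faithfully flat, so $(\fkq:_R\fkm)\hat R=\fkq\hat R:_{\hat R}\fkm\hat R$ and the equality $I^2=\fkq I$ can be checked after completion.

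The two steps you yourself call weight-bearing are, however, exactly where your argument has gaps. First, the descent of the strong $d$-sequence from $\hat R$ to $R$: Cuong/Kawasaki give you a system of parameters of $\hat R$, while the lemma needs elements of $R$, and your claim that replacing the $\hat R$-generators by elements of $R$ congruent to them modulo a high power of $\fkm\hat R$ ``keeps the defining colon relations intact'' is precisely what would have to be proved; stability of $d$-sequence (colon) conditions under such perturbations is a delicate matter, and you give neither an argument nor a reference. (The paper sidesteps this by assuming $R$ itself is a homomorphic image of a Gorenstein ring when it invokes \cite{Cu}, \cite{Kw}; for merely unmixed $R$ the issue is real.) Second, your description of \cite[Theorem 2.1]{GSa} --- that the $d$-sequence property of $x_1^{n_1},\ldots,x_d^{n_d}$ alone forces $I^2=\fkq I$, with no further hypotheses and for all $n_i\ge 1$ --- cannot be its content: in $R=k[[x_1,\ldots,x_d]]$ with $d\ge 2$ (complete, unmixed), the regular system of parameters $x_1,\ldots,x_d$ is a strong $d$-sequence, yet for $n_i=1$ one has $\fkq=\fkm$, $I=\fkm:\fkm=R$, and $I^2=R\neq\fkm=\fkq I$. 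So the quoted theorem necessarily carries additional hypotheses (restrictions on the exponents, e.g. $n_i\ge 2$, and/or on the choice of the $x_i$, e.g. taken inside $\fkm^2$ or a high power of $\fkm$), and your proof never verifies them; the uniformity ``for all $n_i\ge 1$,'' which is the whole point of the lemma, is exactly what this final step fails to deliver as written (for instance, one would at least want to replace the $x_i$ by suitable powers before invoking the theorem, and then check its actual hypotheses).
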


\medskip

Now we will apply the following result to the equality $I^2=\fkq I$ of Goto and Sakurai(see \cite[Theorem 2.1]{GSa}).

\begin{prop}\label{eminequality}
Assume that $\frak q = (x_1, ...,x_d)$ be a parameter ideal such that $I^2=\fkq I$, where $I=\fkq :\fkm$.
Then we have
$$e_1(I) - e_1(\fkq)\leqslant f_0(\frak q).$$
\end{prop}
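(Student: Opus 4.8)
The plan is a direct length computation: using $I^2=\fkq I$ I would squeeze $I^{n+1}$ between two powers of $\fkq$, bound the resulting defect by the reducibility function of $\fkq$, and then compare Hilbert polynomials.

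First, multiplying the relation $I^2=\fkq I$ by $I$ repeatedly gives $I^{n+1}=\fkq^n I$ for all $n\ge 0$; in particular $\fkq$ is a reduction of $I$, so $e_0(I)=e_0(\fkq)$, and $\fkq^{n+1}\subseteq I^{n+1}=\fkq^n I\subseteq\fkq^n$. Applying $\ell_R(-)$ to $0\to I^{n+1}/\fkq^{n+1}\to R/\fkq^{n+1}\to R/I^{n+1}\to 0$ yields, for all $n\ge 0$,
$$\ell_R(R/\fkq^{n+1})-\ell_R(R/I^{n+1})=\ell_R(\fkq^n I/\fkq^{n+1}).$$
Now comes the key point: since $I=\fkq:\fkm$ we have $\fkm I\subseteq\fkq$, hence $\fkm\cdot\fkq^n I=\fkq^n(\fkm I)\subseteq\fkq^{n+1}$, so $\fkq^n I/\fkq^{n+1}\subseteq(\fkq^{n+1}:_R\fkm)/\fkq^{n+1}$ and therefore
$$\ell_R(\fkq^n I/\fkq^{n+1})\ \leqslant\ \ell_R\big((\fkq^{n+1}:_R\fkm)/\fkq^{n+1}\big)=\mathcal N(\fkq^{n+1};R)$$
for all $n\ge 0$. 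Combining the two displays, $\ell_R(R/\fkq^{n+1})-\ell_R(R/I^{n+1})\leqslant\mathcal N(\fkq^{n+1};R)$ for all $n\ge 0$.

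Finally I would pass to polynomials for $n\gg 0$. The right-hand side becomes $p_{\fkq,R}(n+1)$, a polynomial of degree $d-1$ in $n$ whose coefficient of $n^{d-1}$ is $f_0(\fkq)/(d-1)!$; the left-hand side becomes the difference of the Hilbert--Samuel polynomials of $\fkq$ and $I$, which — because $e_0(I)=e_0(\fkq)$ — has degree $\leqslant d-1$ with coefficient of $n^{d-1}$ equal to $\big(e_1(I)-e_1(\fkq)\big)/(d-1)!$. Thus $D(n):=p_{\fkq,R}(n+1)-\big(\ell_R(R/\fkq^{n+1})-\ell_R(R/I^{n+1})\big)$ is a polynomial of degree $\leqslant d-1$ that is nonnegative for $n\gg 0$, so its coefficient of $n^{d-1}$ is nonnegative, i.e. $f_0(\fkq)\geqslant e_1(I)-e_1(\fkq)$, which is the assertion.

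I do not anticipate a genuine obstacle; the argument is elementary once the containment $\fkq^n I\subseteq\fkq^{n+1}:_R\fkm$ is spotted. The two points deserving care are that $e_0(I)=e_0(\fkq)$ (so that the degree-$d$ terms cancel and the $n^{d-1}$-coefficient of the Hilbert-polynomial difference is exactly $e_1(I)-e_1(\fkq)$, not something contaminated by $e_0$), and the elementary fact that an eventually nonnegative polynomial has nonnegative coefficient in the top degree under consideration; one should also note that Step~one only uses $I=\fkq:\fkm$ through the sharp inclusion $\fkm I\subseteq\fkq$, which is precisely what brings $\mathcal N(\fkq^{n+1};R)$ — rather than a coarser length — into the bound.
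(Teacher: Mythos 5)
Your proposal is correct and follows essentially the same route as the paper's own proof: the paper likewise iterates $I^2=\fkq I$ to get $I^{n+1}=\fkq^n I$, writes $\ell(R/\fkq^{n+1})-\ell(R/I^{n+1})=\ell\bigl(\fkq^n(\fkq:\fkm)/\fkq^{n+1}\bigr)\leqslant\ell\bigl((\fkq^{n+1}:\fkm)/\fkq^{n+1}\bigr)$ via $\fkm I\subseteq\fkq$, and concludes by comparing coefficients after noting $e_0(I)=e_0(\fkq)$. Your version only spells out the final polynomial comparison that the paper leaves implicit.
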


\begin{proof}Since $I^2=\fkq I$, we have $I^{n+1}=\fkq^n I$ for all $n\ge 1$. Thus, we have
$$\ell(R/\fkq^{n+1})-\ell(R/I^{n+1})=\ell((\fkq^n(\fkq:\fkm))/\fkq^{n+1})\leqslant \ell((\fkq^{n+1}:\fkm)/\fkq^{n+1}).$$
Since $I^2=\fkq I$, we have $e_0(\fkq)=e_0(I)$. Therefore $e_1(I)-e_1(\fkq)\le f_0(\fkq)$.
\end{proof}

\begin{proof}[of Theorem \ref{T0.1}]
(i) $\Rightarrow$ (ii). Let $\fkq$ be a parameter ideal of $R$ such that $\fkq\subseteq \fkm^2$. Put $I=\fkq:\fkm$. Since $R$ is Cohen-Macaulay, we have $f_0(\fkq)=\calN(\fkq;N)=\ell(I/\fkq)$ by Theorem 1.1 in \cite{Tr}. Since $\fkq\subseteq\fkm^2$, by \cite[Theorem 2.2]{CP}, we have $I^2=\fkq I$. It follows from $R/\fkm$ is infinite that $e_1(I)=\ell(R/I)-e_0(\fkq)$ by Huneke and Ooishi(\cite{Hu}, \cite{O} or cf. \cite[Theorem 6.1]{CGT}). Since $R$ is Cohen-Macaulay, we have $f_0(\fkq)=\ell(I/\fkq)=e_1(I)$. 

(ii) $\Rightarrow$ (iii) is trivial.

(iii) $\Rightarrow$ (iv)  follows from Lemma \ref{L2.4}.

(iv) $\Rightarrow$ (i) Since $I^2=\fkq I$, by Proposition \ref{eminequality} we have $e_1(I)-e_1(\fkq)\le f_0(\fkq)$. Thus we get that
$0\le e_1(I)- f_0(\fkq)\le e_1(\fkq)$. It follows from $R$ is unmixed and the Theorem 1.1 of \cite{GGHOPV} that  $R$ is Cohen-Macaulay, as required.

\end{proof}

\begin{cor}\label{C2.3}
Let $R$ be a Noetherian local ring with $d=\dim R\ge2$. Assume that  $R$ is unmixed, that is $\dim \hat R/\fkp=d$ for all $\fkp\in \Ass(\hat R)$. Then  for all  integers $n$  there exists  a parameter ideal $\frak q\subseteq \frak m^n$, we
    have
$$ e_1(I;R)\leqslant f_0(\frak q;R) ,$$
where $I=\fkq:\fkm$.
\end{cor}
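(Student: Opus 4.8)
The plan is to produce the required parameter ideal from Lemma~\ref{L2.4}, run it through Proposition~\ref{eminequality}, and then close the argument with the non-positivity of the Chern coefficient of a parameter ideal in an unmixed local ring. In outline: first I would fix the integer $n$ and, using Lemma~\ref{L2.4}, choose a system of parameters whose large powers generate an ideal $\fkq$ sitting inside $\fkm^{n}$ and satisfying $I^{2}=\fkq I$ with $I=\fkq:\fkm$; then I would apply Proposition~\ref{eminequality} to get $e_1(I)-e_1(\fkq)\leqslant f_0(\fkq)$; and finally I would absorb the $e_1(\fkq)$ term using $e_1(\fkq)\leqslant 0$.

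In more detail: given $n$, Lemma~\ref{L2.4} furnishes a system $x_1,\ldots,x_d$ of parameters of $R$ such that for \emph{all} exponents $n_i\ge 1$ the ideal $\fkq=(x_1^{n_1},\ldots,x_d^{n_d})$ and $I=\fkq:\fkm$ satisfy $I^{2}=\fkq I$. Choosing $n_i=n$ for every $i$ makes $\fkq\subseteq\fkm^{n}$, so it suffices to verify the displayed inequality of the corollary for this one $\fkq$. Since $I^{2}=\fkq I$, Proposition~\ref{eminequality} gives
$$e_1(I;R)-e_1(\fkq;R)\leqslant f_0(\fkq;R).$$
The remaining point is that $e_1(\fkq;R)\leqslant 0$; this is exactly where unmixedness enters, and it is supplied by \cite{GGHOPV}, where it is shown that the first Hilbert coefficient of any parameter ideal of an unmixed local ring is non-positive (and vanishes precisely in the Cohen--Macaulay case). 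Combining the two inequalities yields $e_1(I;R)\leqslant f_0(\fkq;R)$, as wanted.

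The single substantive ingredient is the bound $e_1(\fkq;R)\leqslant 0$: everything else is bookkeeping --- choosing the exponents large enough to land in $\fkm^{n}$ and quoting Lemma~\ref{L2.4} and Proposition~\ref{eminequality}. Hence I expect the only step needing care is invoking the correct form of the now-settled vanishing conjecture from \cite{GGHOPV}; no genuinely new argument seems to be required beyond that.
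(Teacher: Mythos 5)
Your argument is correct, but it is not the route the paper takes: the paper proves Corollary \ref{C2.3} by simply invoking the equivalence in Theorem \ref{T0.1} (so the derivation implicitly splits into cases --- if $R$ is Cohen--Macaulay and not regular one uses the equality in (ii), and otherwise one uses the failure of (iv) together with Lemma \ref{L2.4} to produce a parameter ideal $\fkq\subseteq\fkm^n$ with $I^2=\fkq I$ and hence $e_1(I)<f_0(\fkq)$). You instead bypass Theorem \ref{T0.1} and reassemble its ingredients directly: Lemma \ref{L2.4} gives $\fkq=(x_1^{n},\ldots,x_d^{n})\subseteq\fkm^{n}$ with $I^2=\fkq I$, Proposition \ref{eminequality} gives $e_1(I)-e_1(\fkq)\leqslant f_0(\fkq)$, and the sign condition $e_1(\fkq)\leqslant 0$ finishes. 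The one point to state carefully is the last one: what \cite{GGHOPV} proves (as quoted in the introduction) is that for unmixed $R$, Cohen--Macaulayness is equivalent to $e_1(\fkq)\geqslant 0$ for some parameter ideal; so for unmixed non-Cohen--Macaulay $R$ the contrapositive yields $e_1(\fkq)<0$ for every parameter ideal, while in the Cohen--Macaulay case $e_1(\fkq)=0$ is the classical fact --- together these give exactly the $e_1(\fkq)\leqslant 0$ you use. The trade-off is mild: your version is uniform (no case analysis, and no need to worry about the regular-ring exclusion in Theorem \ref{T0.1}(i)) and makes the dependence on \cite{GGHOPV} explicit, whereas the paper's version is shorter on the page because Theorem \ref{T0.1} has already been established; mathematically both rest on the same three ingredients.
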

\begin{proof}
The result follows  from Theorem \ref{T0.1}.
\end{proof}

%%%%%%%%%%%%%%%%%%%%%%%%%%%%%%%%%%%%%%%%%%%%%%%%%%%
%%%%%%%%%%%%%%%%%%%%%%%%%%%%%%%%%%%%%%%%%%%%%%%%%%%
%%%%%%%%%%%%%%%%%%%%%%%%%%%%%%%%%%%%%%%%%%%%%%%%%%%

\section{Chern coefficient}

In this section, we denote by $\fkq_i$  the ideal $(x_1,\ldots,x_i)R$ for $i=1,\ldots , d$  and stipulate that  $\fkq_0$ is  the zero ideal of $R$. For a module $M$ over a ring $R$, we denote by $H^i_\fka(M)$ the $i$-th local
cohomology module of $M$ with respect to $\fka$. Then an  $R$-module $M$ is said to be a {\it generalized Cohen-Macaulay module} if 
$H^i_\fkm(M)$ are of finite length for all $i=0,1,\ldots , d-1$ (see \cite {CST}). This condition is equivalent to say that 
there exists a  parameter ideal $\fkq=(x_1,\ldots,x_d)$ of $M$ such that $\fkq H^i_\fkm(M/\fkq_j M)=0$ for all $0\le i+j < d$ (see 
\cite {T}), and such a parameter ideal was called a {\it standard parameter ideal} of $M$. It is well-known that if $M$ is a generalized Cohen-Macaulay module, then every parameter ideal of $M$ in a high enough power of the maximal ideal $\fkm$ is standard. The following  lemma can be easily derived from the basic properties of generalized Cohen-Macaulay modules (see \cite[Theorem 1.1 and Theorem 1.2]{CT} and \cite[Proposition 3.4]{Tr}).
\begin{fact}\label{L3.1}
\rm 
Let $(R, \frak m)$ be a generalized Cohen-Macualay
ring of dimension $d\geqslant 1$. Set $r_i(R) = \dim_{R/\frak m}((0):_{H^i_{\frak m}(R)} \frak m)$. 
Then the following statements hold true. 
\begin{itemize}
\item[$(1)$] There exists an integer $n$ such that for all parameter ideal $\fkq\subseteq \fkm^n$, we have $I^2=\fkq I$, where $I=\fkq:\fkm$ and
$$\calN(\frak q;R) = \sum_{i=0}^d \binom{d}{i}r_i(R),$$
\item[$(2)$] Let $\frak q = (x_1, x_2,...,x_d)$ be a standard
parameter ideal such that
$$\calN(\frak q;R) = \sum_{i=0}^d \binom{d}{i}r_i(R).$$
Then we have
\begin{itemize}
\item[$(a)$] $r_i(R/(x_1))=r_i(R)+r_{i+1}(R)$ for all $i\geqslant 0$.
\item[$(b)$] $$f_0(\frak q;R)\leqslant\begin{cases} f_0(\frak q^\prime;R^\prime) - (r_0(R) + r_1(R))&\text{ if $\dim R=2$,  }\\
f_0(\frak q^\prime;R^\prime)&\text{if  $\dim R\geqslant 3$.}
 \end{cases}
$$
\item[$(c)$]
 $$f_0(\frak q;R) \leqslant  \sum_{j=1}^{d} \binom{d-1}{j-1} r_j(R).$$
\end{itemize}
\end{itemize}
\end{fact}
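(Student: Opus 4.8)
The three parts are meant to be read off from the local cohomology description of the index of reducibility together with \cite[Theorem 1.1 and Theorem 1.2]{CT} and \cite[Proposition 3.4]{Tr}, so the plan is to unwind those. Throughout I write $R'=R/(x_1)$ and $\fkq'=\fkq R'$; then $R'$ is again generalized Cohen-Macaulay, now of dimension $d-1$, and $\fkq'$ is generated by the images of $x_2,\ldots,x_d$. For $(1)$ I would fix $n$ so large that every parameter ideal $\fkq\subseteq\fkm^n$ is standard. Standardness forces $x_1,\ldots,x_d$ to annihilate $H^i_\fkm(R)$ for every $i<d$, so the computation of $\calN(\fkq;R)=\ell_R([\fkq:\fkm]/\fkq)$ via the Koszul complex on $x_1,\ldots,x_d$ collapses to reading off socles from $\bigoplus_{i}H^i_\fkm(R)$; this is exactly \cite[Theorem 1.1 and Theorem 1.2]{CT} and yields $\calN(\fkq;R)=\sum_{i=0}^{d}\binom{d}{i}r_i(R)$. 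The equality $I^2=\fkq I$ with $I=\fkq:\fkm$, for $\fkq$ deep enough, is the known companion statement for generalized Cohen-Macaulay rings (compare Lemma \ref{L2.4} and \cite{CP}), which I would quote rather than reprove.

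For $(2)(a)$ I would run the usual local cohomology bookkeeping. Split $0\to(0:_Rx_1)\to R\xrightarrow{x_1}R\to R'\to 0$ into $0\to(0:_Rx_1)\to R\to x_1R\to 0$ and $0\to x_1R\to R\to R'\to 0$. Finiteness of $(0:_Rx_1)$ gives $H^i_\fkm(x_1R)\cong H^i_\fkm(R)$ for $i\ge 1$; substituting this into the cohomology sequence of the second extension and using that multiplication by $x_1$ kills $H^i_\fkm(R)$ for $i<d$ produces
\[
0\to H^i_\fkm(R)\to H^i_\fkm(R')\to H^{i+1}_\fkm(R)\to 0\qquad(0\le i\le d-2),
\]
together with the analogous sequence having $(0:_{H^d_\fkm(R)}x_1)$ in place of $H^{i+1}_\fkm(R)$ when $i=d-1$ (and the socle of that module has dimension $r_d(R)$). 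Applying $(0:_{-}\fkm)$ is only left exact, so a priori one gets $r_i(R')\le r_i(R)+r_{i+1}(R)$; the hypothesis $\calN(\fkq;R)=\sum_{i}\binom{d}{i}r_i(R)$, together with the chain of inequalities linking $\calN(\fkq;R)$ to $\calN(\fkq';R')$ through the filter-regular element $x_1$, forces equality throughout, hence $r_i(R')=r_i(R)+r_{i+1}(R)$ for all $i$.

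For $(2)(b)$ I would compare $\calN(\fkq^nR;R)$ with $\calN(\fkq^nR';R')$ for $n\gg 0$ through the exact sequence $0\to(0:_{R/\fkq^nR}x_1)\to R/\fkq^nR\xrightarrow{x_1}R/\fkq^nR\to R'/\fkq^n R'\to 0$: since $x_1$ lies in a standard system of parameters, the kernel $(0:_{R/\fkq^nR}x_1)$ and the failure of $\Soc(R/\fkq^nR)\to\Soc(R'/\fkq^n R')$ to be surjective are both governed by the finite-length modules $H^0_\fkm(R),\ldots,H^{d-1}_\fkm(R)$, so their lengths are eventually polynomial in $n$ of degree at most $d-2$. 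Comparing leading coefficients of the Hilbert polynomials $p_{\fkq,R}$ and $p_{\fkq',R'}$ then gives $f_0(\fkq;R)\le f_0(\fkq';R')$ when $d\ge 3$; when $d=2$ both polynomials have degree $1$, the degree-zero discrepancy now contributes to the leading term, and a precise accounting of the socle that is lost, namely the part coming from $H^0_\fkm(R)$ and $H^1_\fkm(R)$, produces the correction $-(r_0(R)+r_1(R))$. I expect pinning this correction term down precisely, and upgrading the various inequalities to the equalities actually needed, to be the main technical obstacle; the relevant identity is essentially \cite[Proposition 3.4]{Tr}.

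Finally $(2)(c)$ is an induction on $d$. For $d=1$ one has $\fkq^nR\cap H^0_\fkm(R)=0$ for $n\gg 0$, so $0\to H^0_\fkm(R)\to R/\fkq^nR\to\bar R/\fkq^n\bar R\to 0$ with $\bar R=R/H^0_\fkm(R)$ one-dimensional Cohen-Macaulay; left exactness of the socle functor and the constancy $\calN(\fkq^n;\bar R)=r_1(\bar R)=r_1(R)$ give $f_0(\fkq;R)=\calN(\fkq^n;R)\le r_0(R)+r_1(R)$ for $n\gg 0$. For $d\ge 2$ apply $(b)$, then the inductive hypothesis to $R'$, and rewrite the result using $(a)$: the substitution $r_j(R')=r_j(R)+r_{j+1}(R)$ together with $\binom{d-2}{j-1}+\binom{d-2}{j-2}=\binom{d-1}{j-1}$ turns $\sum_{j=1}^{d-1}\binom{d-2}{j-1}r_j(R')$ (after subtracting $r_0(R)+r_1(R)$ in the case $d=2$) into $\sum_{j=1}^{d}\binom{d-1}{j-1}r_j(R)$, which is the asserted bound.
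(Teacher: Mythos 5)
The paper never actually proves this statement: it is quoted as a Fact and attributed to \cite[Theorems 1.1 and 1.2]{CT} and \cite[Proposition 3.4]{Tr}. Your proposal follows the same skeleton --- (1) is delegated to \cite{CT} together with the Goto--Sakurai type statement $I^2=\fkq I$, and the hard part of (2)(b) is delegated to \cite[Proposition 3.4]{Tr} --- while filling in the easier derivations, and those reconstructions are essentially sound. In (2)(a) the squeeze works because $R/\fkq\cong R'/\fkq'R'$ gives $\calN(\fkq;R)=\calN(\fkq';R')$, the upper bound $\calN(\fkq';R')\le\sum_i\binom{d-1}{i}r_i(R')$ holds for the standard ideal $\fkq'$ in the generalized Cohen--Macaulay ring $R'$ (this is again where \cite{CT} enters), and termwise $r_i(R')\le r_i(R)+r_{i+1}(R)$; the hypothesized equality then forces equality everywhere, and --- a point you use implicitly but should state --- it also propagates the hypothesis $\calN(\fkq';R')=\sum_i\binom{d-1}{i}r_i(R')$ to $R'$, which is what legitimizes the induction in (2)(c). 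The induction itself, with base $f_0(\fkq;R)\le r_0(R)+r_1(R)$ in dimension one and the Pascal identity, is correct and is exactly what the $d=2$ correction in (2)(b) is calibrated for.

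Two concrete caveats. First, in (1) your justification overstates what standardness gives: there are standard parameter ideals with $\calN(\fkq;R)<\sum_i\binom{d}{i}r_i(R)$ (already for $R=k[[t,s]]/(t^2,ts)$ and $\fkq=(s)$, where $\calN(\fkq;R)=1$ but $r_0+r_1=2$), so the equality genuinely requires $\fkq\subseteq\fkm^n$ with $n\gg0$, which is what \cite[Theorem 1.1]{CT} supplies; your choice of $n$ rescues the conclusion, but ``standardness forces the collapse'' is not the reason. Second, in (2)(b) the step ``comparing leading coefficients of $p_{\fkq,R}$ and $p_{\fkq',R'}$'' does not parse as written: these polynomials have degrees $d-1$ and $d-2$ respectively, so $f_0(\fkq;R)\le f_0(\fkq';R')$ is not a like-for-like comparison of leading terms; one has to control the first difference $p_{\fkq,R}(n)-p_{\fkq,R}(n-1)$ (equivalently, run a superficial-element argument on the socle function $n\mapsto\ell((\fkq^n:\fkm)/\fkq^n)$), and the $d=2$ correction $-(r_0(R)+r_1(R))$ is precisely the content of \cite[Proposition 3.4]{Tr}, which you, like the paper, end up citing rather than proving. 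So relative to the paper there is no real divergence, but taken as a self-contained argument, part (2)(b) remains a citation, not a proof.
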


Now we will apply Proposition \ref{eminequality} to generalized Cohen-Macaulay
rings. From there we get the following result.

\begin{corollary}\label{C3.2}
Let $R$ be a generalized Cohen-Macaulay
ring of dimension $d$. Then there exists an integer $n$ such that for all parameter ideals $\frak q\subseteq \frak m^n$, we have
 $$e_1(I)-e_1(\fkq)\leqslant f_0(\frak q) \leqslant \mathcal N(\frak q;R),$$
where $I=\fkq:\fkm$. Moreover, $e_1(I)-e_1(\fkq)= \mathcal N(\frak q;R)$ if and only if $R$ is Cohen-Macaulay.
\end{corollary}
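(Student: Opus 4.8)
\textbf{Proof proposal for Corollary \ref{C3.2}.}
The plan is to assemble the chain of inequalities from the three ingredients already available: Proposition \ref{eminequality}, which supplies $e_1(I)-e_1(\fkq)\le f_0(\fkq)$ whenever $I^2=\fkq I$ with $I=\fkq:\fkm$; Fact \ref{L3.1}(1), which guarantees that for a generalized Cohen-Macaulay ring there is an integer $n$ such that every parameter ideal $\fkq\subseteq\fkm^n$ satisfies $I^2=\fkq I$ and $\calN(\fkq;R)=\sum_{i=0}^d\binom{d}{i}r_i(R)$; and Fact \ref{L3.1}(2)(c), which gives $f_0(\fkq;R)\le\sum_{j=1}^d\binom{d-1}{j-1}r_j(R)$ for such standard parameter ideals. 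First I would fix $n$ large enough that Fact \ref{L3.1}(1) applies (replacing $n$ if necessary so that also $\fkq\subseteq\fkm^2$, which is automatic once $n\ge2$), so that for every parameter ideal $\fkq\subseteq\fkm^n$ we have both $I^2=\fkq I$ and the explicit formula for $\calN(\fkq;R)$. Then Proposition \ref{eminequality} gives the left-hand inequality $e_1(I)-e_1(\fkq)\le f_0(\fkq)$ directly.

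For the middle inequality $f_0(\fkq)\le\calN(\fkq;R)$, I would use the combinatorial identity $\sum_{j=1}^d\binom{d-1}{j-1}r_j(R)\le\sum_{i=0}^d\binom{d}{i}r_i(R)$, which is immediate since each $r_j(R)\ge0$ and $\binom{d-1}{j-1}\le\binom{d}{j}$ term by term for $1\le j\le d$ (and the right-hand sum has the extra nonnegative term $r_0(R)$). Combining this with Fact \ref{L3.1}(2)(c) and the formula for $\calN(\fkq;R)$ from Fact \ref{L3.1}(1) yields $f_0(\fkq)\le\calN(\fkq;R)$. This establishes the displayed chain $e_1(I)-e_1(\fkq)\le f_0(\fkq)\le\calN(\fkq;R)$ for all parameter ideals $\fkq\subseteq\fkm^n$.

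For the "moreover" clause, the direction from Cohen-Macaulay to the equality $e_1(I)-e_1(\fkq)=\calN(\fkq;R)$ should follow because, when $R$ is Cohen-Macaulay, $e_1(\fkq)=0$, $f_0(\fkq)=\calN(\fkq;R)=\ell(I/\fkq)$ by the results quoted in the proof of Theorem \ref{T0.1} (Northcott's constancy of reducibility plus Huneke-Ooishi's $e_1(I)=\ell(R/I)-e_0(\fkq)$), and $e_1(I)=e_1(I)-e_1(\fkq)$ equals $\ell(I/\fkq)$ as computed there; I would simply cite the relevant lines. Conversely, if $e_1(I)-e_1(\fkq)=\calN(\fkq;R)$ for such a $\fkq$, then the chain forces $f_0(\fkq)=\calN(\fkq;R)$ as well, so in particular $e_1(I)-e_1(\fkq)=f_0(\fkq)$; I would then invoke Proposition \ref{eminequality}'s proof more carefully — the inequality there came from $\ell((\fkq^n(\fkq:\fkm))/\fkq^{n+1})\le\ell((\fkq^{n+1}:\fkm)/\fkq^{n+1})$, and equality in the limit pins down $e_1(\fkq)$. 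The cleanest route, though, is probably to observe that $e_1(I)-e_1(\fkq)=\calN(\fkq;R)$ together with $e_1(I)-e_1(\fkq)\le f_0(\fkq)\le\calN(\fkq;R)$ gives $f_0(\fkq)=\calN(\fkq;R)$, hence by Theorem \ref{T0.1} applied appropriately (or directly by the Vanishing Conjecture result, Theorem 1.1 of \cite{GGHOPV}, via $0\le e_1(I)-f_0(\fkq)\le e_1(\fkq)$ forcing $e_1(\fkq)\ge -\calN(\fkq;R)$ — this is not quite enough) one concludes $e_1(\fkq)=0$ and $R$ is Cohen-Macaulay.

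The main obstacle I anticipate is the converse direction of the "moreover" clause: extracting Cohen-Macaulayness from a single equality $e_1(I)-e_1(\fkq)=\calN(\fkq;R)$. The subtlety is that $e_1(I)-e_1(\fkq)\le f_0(\fkq)$ need not be an equality, so one cannot immediately pass to the statement "$f_0(\fkq)=\calN(\fkq;R)\Rightarrow$ CM" of Fact \ref{L3.1} or Theorem \ref{T0.1}. I expect the resolution is that equality in Proposition \ref{eminequality} at the level of the degree-$(d-1)$ coefficient, combined with $f_0(\fkq)=\calN(\fkq;R)$ from the squeeze, forces $e_1(\fkq)=0$; and for an unmixed (here generalized Cohen-Macaulay, hence unmixed after completion) ring $e_1(\fkq)=0$ for one parameter ideal already implies Cohen-Macaulay by \cite{GGHOPV}. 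So the real work is tracking the Hilbert-coefficient bookkeeping in the proof of Proposition \ref{eminequality} to see that $e_1(I)-e_1(\fkq)=f_0(\fkq)$ entails $\ell((\fkq^{n+1}:\fkm)/\fkq^{n+1})$ is eventually polynomial of degree exactly $d-1$ with no correction term, which is where $e_1(\fkq)=0$ comes from.
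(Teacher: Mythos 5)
The first half of your proposal is fine and is exactly the paper's argument: choose $n$ as in Fact~\ref{L3.1}(1), so that $I^2=\fkq I$ and $\calN(\fkq;R)=\sum_{i=0}^d\binom{d}{i}r_i(R)$ for every parameter ideal $\fkq\subseteq\fkm^n$; Proposition~\ref{eminequality} then gives $e_1(I)-e_1(\fkq)\le f_0(\fkq)$, and Fact~\ref{L3.1}(2)(c) together with the termwise comparison $\binom{d-1}{j-1}\le\binom{d}{j}$ gives $f_0(\fkq)\le\calN(\fkq;R)$. The implication ``Cohen--Macaulay $\Rightarrow$ equality'' as you sketch it (via $e_1(\fkq)=0$, $f_0(\fkq)=\calN(\fkq;R)=\ell(I/\fkq)$ and Huneke--Ooishi) is also what the paper does.

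The genuine gap is the converse of the ``moreover'' clause, precisely where you yourself hesitate, and none of your proposed escapes works. Theorem~\ref{T0.1} and the vanishing theorem of \cite{GGHOPV} both assume $R$ unmixed, and a generalized Cohen--Macaulay ring need not be unmixed: it may have $W=H^0_\fkm(R)\neq0$, i.e.\ $\fkm\in\Ass R$, so your parenthetical ``generalized Cohen--Macaulay, hence unmixed after completion'' is false. Moreover, even if you could extract $e_1(\fkq)=0$ (your ``equality in the limit pins down $e_1(\fkq)$'' is only a speculation, never carried out, and Theorem~\ref{T0.1}(iv) needs $f_0(\fkq)\le e_1(I)$, which is the wrong direction since $e_1(\fkq)\le 0$), this still could not yield Cohen--Macaulayness: by Fact~\ref{F2}(1), $e_1$ of $\fkq$ and of $I$ are insensitive to $W$ when $d\ge2$, so no condition on $e_1(I)-e_1(\fkq)$ alone can force $H^0_\fkm(R)=0$. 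The paper's actual argument is purely numerical and needs none of this: if $e_1(I)-e_1(\fkq)=\calN(\fkq;R)$, the squeeze gives $f_0(\fkq)=\calN(\fkq;R)$, and then
$$\calN(\fkq;R)=f_0(\fkq)\leqslant\sum_{j=1}^{d}\binom{d-1}{j-1}r_j(R)\leqslant\sum_{i=0}^{d}\binom{d}{i}r_i(R)=\calN(\fkq;R)$$
forces equality throughout; since $\binom{d}{j}-\binom{d-1}{j-1}=\binom{d-1}{j}>0$ for $1\le j\le d-1$ and $r_0(R)$ appears only in the right-hand sum, this forces $r_i(R)=0$ for all $i\neq d$. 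As $H^i_\fkm(R)$ has finite length for $i<d$, a vanishing socle means the module vanishes, so $H^i_\fkm(R)=0$ for $i\neq d$ and $R$ is Cohen--Macaulay. You had all the ingredients (Fact~\ref{L3.1}(1) and (2)(c)) in hand; the missing idea was to exploit equality in the binomial comparison rather than to route the argument through $e_1(\fkq)=0$ and unmixedness.
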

\begin{proof}
Choose an integer $n$ as in Fact \ref{L3.1} 1). Let $\fkq$ be a parameter ideal such that $\fkq\subseteq\fkm^n$.  Then by Fact \ref{L3.1} 2) we have $I^2=\fkq I$ and $f_0(\fkq)\leqslant \calN(\fkq;R)$ for all parameter ideals $\fkq\subseteq\fkm^n$.  It follows  from Proposition \ref{eminequality} that 
 $$e_1(I)-e_1(\fkq)\leqslant f_0(\frak q) \leqslant \mathcal N(\frak q;R).$$

Now assume that $e_1(I)-e_1(\fkq)= \mathcal N(\frak q;R)$. Then $f_0(\frak q) = \mathcal N(\frak q;R)$. Since $\fkq\subseteq\fkm^n$, we have $f_0(\frak q;R) \leqslant  \sum_{j=1}^{d} \binom{d-1}{j-1} r_j(R)\le  \sum_{i=0}^d \binom{d}{i}r_i(R)= \calN(\frak q;R)$. Therefore $H^i_\fkm(R)=0$ for all $i\not=d$. Hence $R$ is Cohen-Macaulay.

If $R$ is Cohen-Macaulay then by Theorem \ref{T0.1} and Theorem 1.1 in \cite{Tr} we have $e_1(\fkq)=0$, $e_1(I)= f_0(\fkq)$ and $f_0(\fkq)=\calN(\fkq;R)$. Hence $e_1(I)-e_1(\fkq)= \mathcal N(\frak q;R)$, as required.
\end{proof}

%%%%%%%%%%%%%%%%%%%%%%%%%%%%%%%%%%%%%%%%%%%%%%%%%%%
%%%%%%%%%%%%%%%%%%%%%%%%%%%%%%%%%%%%%%%%%%%%%%%%%%%
%%%%%%%%%%%%%%%%%%%%%%%%%%%%%%%%%%%%%%%%%%%%%%%%%%%

Now we set $W = \H^0_\fkm(R)$. When we investigate in the case of 
$W\not =0$, we reduce $W=0$ using the next result(See \cite{CGT} and \cite{Tr}), which is well known,
plays a key role.

\begin{fact}\label{F2}
\rm Set $\overline{R}=R/W$. Then the following statements holds true.
 \begin{itemize}
\item[$(1)$] $e_1(I;\overline{R})= e_1(I;R)$ provided $d\ge 2$ (see \cite{CGT}).  
\item[$(2)$] There exists a positive integer $n_0$ such that  for all $\frak m$-primary  ideals $I\subseteq \frak m^{n_0}$, we have
 $$\mathcal{N}(I;R)
 =\mathcal{N}(I;\overline{R})+\ell((0):_R\frak m),$$
 and $$(\fkq +W):\fkm=\fkq:\fkm+W.$$
 \item[$(3)$] We have $$f_0(I;R) =
\begin{cases} f_0(\overline{I};\overline R) + \ell((0):_R\frak m)&\text{ if $\dim R=1$,  }\\
f_0(\overline{I};\overline R)&\text{if  $\dim R\geqslant 2$,}
 \end{cases}
 $$
 where $\overline{I}=(I+W)/W$.

 \end{itemize}

\end{fact}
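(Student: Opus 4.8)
The final statement to prove is Fact~\ref{F2}, which records three formulas relating invariants of $R$ to those of $\overline R = R/W$ where $W = \H^0_\fkm(R)$. Since this is labelled as a "Fact" and the paper explicitly cites \cite{CGT} and \cite{Tr} for it, the plan is to assemble it from known reduction results rather than prove everything from scratch; nonetheless I sketch how each piece goes.

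\medskip

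\textbf{Part (1).} The plan is to compare the Hilbert--Samuel functions of $R$ and $\overline R$ with respect to $I$. From the exact sequence $0 \to W \to R \to \overline R \to 0$ one gets, for each $n$, an exact sequence relating $\ell(R/I^{n+1})$, $\ell(\overline R/I^{n+1}\overline R)$ and $\ell(W/(W\cap I^{n+1}))$. Because $W$ has finite length, $W \cap I^{n+1} = 0$ for $n \gg 0$ (Artin--Rees, or simply that $I$ is $\fkm$-primary and $W$ is killed by a power of $\fkm$), so $\ell(R/I^{n+1}) = \ell(\overline R/I^{n+1}\overline R) + \ell(W)$ for large $n$. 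Two Hilbert polynomials differing by a constant have the same coefficients in all degrees $\ge 1$; since $d = \dim R = \dim \overline R \ge 2$, the coefficient $e_1$ sits in degree $d-1 \ge 1$, whence $e_1(I;\overline R) = e_1(I;R)$. (The constant $\ell(W)$ is absorbed into $e_d$.)

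\medskip

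\textbf{Part (2).} The first formula computes the index of reducibility. The plan is: for $I$ contained in a high power $\fkm^{n_0}$, one shows $W \cap IR = 0$ and more precisely that $IR :_R \fkm$ decomposes compatibly with the direct-sum-like splitting induced by $W$. Concretely, $(IR :_R \fkm)/IR$ has a submodule $((W + IR):\fkm)/IR$; using $W \cap IR = 0$ and $\fkm^{n_0} W = 0$ one identifies $((0):_R \fkm)$ (which lies in $W$) as a direct summand and the complementary part as $(\overline{I}\overline R :_{\overline R}\fkm)/\overline I\overline R$, giving the length formula $\calN(I;R) = \calN(I;\overline R) + \ell((0):_R\fkm)$. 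The identity $(\fkq + W):\fkm = (\fkq:\fkm) + W$ is the key algebraic lemma: "$\supseteq$" is clear since $\fkm W \subseteq W \subseteq \fkq + W$; for "$\subseteq$", take $x$ with $\fkm x \subseteq \fkq + W$, write things modulo the finite-length piece $W$, and use that $\fkq$ is a parameter ideal together with $W \cap \fkq = 0$ for $\fkq$ small enough to lift back. I expect this to be the main obstacle, since it requires carefully choosing $n_0$ so that $W$ meets every relevant ideal trivially and so that the relevant "socle" computations split.

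\medskip

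\textbf{Part (3).} This is the asymptotic (leading-coefficient) version of (2): applying the formula in (2) to $I^n$ in place of $I$ and letting $n \to \infty$. For $n \gg 0$ we have $\calN(I^n;R) = \calN(I^n;\overline R) + \ell((0):_R\fkm)$, i.e. $p_{I,R}(n) = p_{\overline I,\overline R}(n) + \ell((0):_R\fkm)$ for large $n$. When $\dim R = 1$ these are constant polynomials (degree $s-1 = 0$), so $f_0(I;R) = f_0(\overline I;\overline R) + \ell((0):_R\fkm)$. When $\dim R \ge 2$ the leading coefficient $f_0$ lives in degree $s - 1 \ge 1$, so the additive constant $\ell((0):_R\fkm)$ does not affect it and $f_0(I;R) = f_0(\overline I;\overline R)$. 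Throughout one uses that passing to $\overline R$ does not change dimension (since $W$ has finite length) and that $\overline I \overline R = (IR + W)/W$ has the same powers as the image of $I^nR$, which follows again from $W$ being $\fkm$-torsion. The whole argument is essentially a bookkeeping exercise on Hilbert polynomials once the length identity of Part (2) is in hand.
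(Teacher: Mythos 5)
Your reductions for parts (1) and (3) are sound: the exact sequence $0\to W\to R\to \overline R\to 0$ together with Artin--Rees (so that $W\cap I^{n+1}=0$ for $n\gg 0$) shows the two Hilbert--Samuel polynomials differ by the constant $\ell(W)$, which affects only $e_d$, giving (1) for $d\ge 2$; and (3) is indeed just (2) applied to $I^n$ plus the observation that an additive constant changes $p_{I,R}$ only in degree $0$. Note, however, that the paper itself offers no proof of this Fact (it is quoted from \cite{CGT}, \cite{Tr}), so the real question is whether your sketch of (2) would stand on its own.

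It does not, and the gap is exactly at the point you flag as "the main obstacle." Choosing $n_0$ with $\fkm^{n_0}W=0$ and $\fkm^{n_0}\cap W=0$ gives, for $I\subseteq\fkm^{n_0}$, the exact sequence $0\to W\to R/I\to R/(I+W)\to 0$ and hence, applying $\Hom_R(k,-)$, only the \emph{left-exact} sequence $0\to (0):_R\fkm\to \Soc(R/I)\to \Soc(R/(I+W))$, i.e.\ the inequality $\calN(I;R)\le \calN(I;\overline R)+\ell((0):_R\fkm)$ together with the trivial inclusion $\fkq:\fkm+W\subseteq(\fkq+W):\fkm$. The asserted equality, equivalently the surjectivity of $\Soc(R/I)\to\Soc(R/(I+W))$ and the reverse inclusion $(\fkq+W):\fkm\subseteq \fkq:\fkm+W$, is obstructed by a class in $\Ext^1_R(k,W)$, and this class does not vanish merely because $W\cap I=0$ and $\fkm^{n_0}W=0$; your phrase "lift back" assumes precisely the surjectivity that has to be proved. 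This splitting-on-socles phenomenon for ideals deep inside $\fkm$ is the genuine content of the Goto--Sakurai/Cuong--Quy results that the paper is citing (compare Lemma \ref{sur}, the surjectivity on socles of the maps $\phi^{p,1}_{\x,M}$, and the splitting lemmas in \cite{GSa}, \cite{CQT}); without invoking or reproving such a result, part (2) — and with it the $d=1$ case and the equality in (3), which you derive from (2) — remains unproved.
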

\medskip

The next lemma shows the existence of a special superficial  element which is useful in many inductive proofs in the sequel.

\begin{fact}[\cite{GNi}]\label{F1}\rm
Suppose $\fkq$ is a reduction of $I$. Then there exists an element $x\in \fkq$ 
which is superficial for both $I$ and $\fkq$. Moreover, for such element
$x\in \fkq$, setting  $R^\prime = R/xR$, we have $e_1(I) - e_1(\fkq) = e_1(IR^\prime) - e_1(\fkq R^\prime)$ provided $d\geq 2$.

\end{fact}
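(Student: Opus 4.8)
The plan is to treat the two assertions of the statement separately, using throughout the standing hypothesis that $k=R/\fkm$ is infinite. For the existence of a common superficial element I would argue by genericity in the $k$-vector space $\fkq/\fkm\fkq$. Recall that $x\in\fkq$ is superficial for $\fkq$ exactly when its leading form in $[\gr_\fkq(R)]_1$ avoids the finitely many associated primes of $\gr_\fkq(R)$ that do not contain the irrelevant ideal, and that the same criterion applied to $\gr_I(R)$ (via $\fkq\subseteq I$, restricting to $x\notin I^2$ so that the leading form has degree one) characterises superficiality for $I$. The one thing to check is that the image of $\fkq$ in $I/I^2=[\gr_I(R)]_1$ lies in no relevant associated prime of $\gr_I(R)$: since $\fkq$ is a reduction of $I$ one has $I^{n+1}=\fkq I^n$ for $n\gg0$, so $\gr_I(R)$ is module-finite over the subalgebra generated by $R/I$ and the leading forms $\overline{x_1},\dots,\overline{x_d}\in[\gr_I(R)]_1$ of the generators of $\fkq$; hence $(\overline{x_1},\dots,\overline{x_d})\gr_I(R)$ contains a power of the irrelevant ideal, and so no relevant prime can contain all the $\overline{x_j}$. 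Consequently the elements of $\fkq/\fkm\fkq$ which fail to be superficial for $\fkq$, or fail to be superficial for $I$, or lie in one of the finitely many minimal primes of $R$ of dimension $d$ (this last condition, needed to make $x$ a parameter, excludes only a proper subspace since $\fkq$ is $\fkm$-primary and $d\ge2$), form a finite union of proper $k$-subspaces; as $k$ is infinite this union is not all of $\fkq/\fkm\fkq$, and any $x$ outside it works.

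For the Chern coefficient identity I would fix such an $x$, set $R'=R/xR$ (so $\dim R'=d-1$), and first observe that superficiality of $x$ for $I$ forces $(0:_Rx)\cap I^n=0$ for $n\gg0$; hence $W_0:=(0:_Rx)$ has finite length $w_0:=\ell_R(W_0)$, and — this is the crucial point — $w_0$ depends only on $x$, not on $I$ or $\fkq$. Next I would invoke the standard fact for superficial elements that $(I^{n+1}:_Rx)=I^n+W_0$ for $n\gg0$, and insert it into the four-term exact sequence
$$0\longrightarrow (I^{n+1}:_Rx)/I^n\longrightarrow R/I^n\xrightarrow{\ \cdot x\ }R/I^{n+1}\longrightarrow R'/(IR')^{n+1}\longrightarrow 0$$
to obtain $\ell_R(R'/(IR')^{n+1})=\ell_R(R/I^{n+1})-\ell_R(R/I^n)+w_0$ for $n\gg0$; that is, the Hilbert--Samuel polynomial of $IR'$ on $R'$ equals the first difference of that of $I$ on $R$, plus the constant $w_0$. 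Comparing coefficients in the binomial basis, this first-difference operation carries $e_i(I)$ to $e_i(IR')$ for $i\le d-2$, while the added constant $w_0$ modifies only the bottom coefficient $e_{d-1}$; running the identical argument for $\fkq$ produces the same correction $w_0$, so upon forming $e_1(I)-e_1(\fkq)$ the correction cancels and we get $e_1(I)-e_1(\fkq)=e_1(IR')-e_1(\fkq R')$, which is what we want once $d\ge2$ (so that $e_1$ is actually among the coefficients being compared).

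The step I expect to be the main obstacle is the one glossed above as ``standard'': because $x$ need not be a nonzerodivisor, one cannot simply claim $e_i(IR')=e_i(I)$, and the content of the argument is precisely that the only discrepancy is the intrinsic quantity $\ell_R(0:_Rx)$, which is then killed by passing to the difference of the two Hilbert series — and this is exactly why $d\ge2$ is assumed. Establishing the stabilisation $(I^{n+1}:_Rx)=I^n+(0:_Rx)$ for $n\gg0$ is the genuinely technical input; I would derive it from the defining property of a superficial element together with the Artin--Rees lemma applied to $(0:_Rx)\cap I^n$, this being the one place requiring real care rather than routine manipulation.
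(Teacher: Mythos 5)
Your proof is correct. The paper offers no argument of its own for this Fact---it is quoted directly from Goto--Nishida \cite{GNi}---and your route (a generic choice in $\fkq/\fkm\fkq$, using that the leading forms of $\fkq$ generate a power of the irrelevant ideal of $\gr_I(R)$ because $\fkq$ is a reduction, followed by the four-term sequence together with the stabilisations $(I^{n+1}:_Rx)=I^n+(0:_Rx)$ and $(0:_Rx)\cap I^n=0$, so that the common correction $\ell_R(0:_Rx)$ cancels when forming $e_1(I)-e_1(\fkq)$) is precisely the standard superficial-element computation underlying the cited result, so there is nothing to object to beyond the routine verification of the stabilisation lemma you already flag.
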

\medskip

\begin{proposition}\label{P6.5}
Let $R$ be a Noetherian local ring with $d=\dim R\ge2$. Assume that  $R$ is unmixed, that is $\dim \hat R/\fkp=d$ for all $\fkp\in \Ass(\hat R)$. Assume that there exists an integer $n$ such that 
for all parameter ideals $\frak q=(x_1,x_2,\ldots,x_d)\subseteq \frak m^n$ we
    have
$$ \mathcal{N}(\frak q;R) \leqslant e_1(I)-e_1(\fkq),$$
where $I=\fkq :\fkm$.
Then $R$ is Cohen-Macaulay.
\end{proposition}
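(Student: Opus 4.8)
\textit{Step 1 (reductions and a good family of parameter ideals).} The plan is to combine the hypothesis with an inequality valid for \emph{all} parameter ideals $\fkq$ with $I^2=\fkq I$, so as to force an exact equality, and then to deduce Cohen--Macaulayness from that equality. Enlarging $n$ we may assume $n\geq2$; as $R$ is unmixed and $d\geq2$, $\H^0_\fkm(R)=0$, so the passage to $R/\H^0_\fkm(R)$ of Fact~\ref{F2} is vacuous here. By Lemma~\ref{L2.4} fix a system of parameters $x_1,\dots,x_d$ such that, for all $n_1,\dots,n_d\geq1$, the ideal $\fkq=(x_1^{n_1},\dots,x_d^{n_d})$ satisfies $I^2=\fkq I$ with $I=\fkq:\fkm$; taking all $n_i\geq n$ gives $\fkq\subseteq\fkm^n$, so the hypothesis applies to all these $\fkq$, and for them $I^{m+1}=\fkq^m I$ for $m\geq1$ and $e_0(I)=e_0(\fkq)$.

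\textit{Step 2 (a universal bound, and the resulting equality).} Since $\fkm I=\fkm(\fkq:\fkm)\subseteq\fkq$ we get $\fkm I^{m+1}=\fkq^m\,\fkm I\subseteq\fkq^{m+1}$, hence $\fkq^{m+1}\subseteq I^{m+1}\subseteq\fkq^{m+1}:\fkm$. The multiplication map $\varphi_m\colon(\fkq^m/\fkq^{m+1})\otimes_{R/\fkq}(I/\fkq)\twoheadrightarrow\fkq^m I/\fkq^{m+1}=I^{m+1}/\fkq^{m+1}$ is onto, and since $\fkm(I/\fkq)=0$ its source has $k$-dimension $\mu(\fkq^m)\,\mathcal{N}(\fkq;R)\leq\binom{m+d-1}{d-1}\mathcal{N}(\fkq;R)$; therefore $\ell_R(R/I^{m+1})=\ell_R(R/\fkq^{m+1})-\ell_R(I^{m+1}/\fkq^{m+1})\geq\ell_R(R/\fkq^{m+1})-\binom{m+d-1}{d-1}\mathcal{N}(\fkq;R)$, and comparing Hilbert polynomials (using $e_0(I)=e_0(\fkq)$) gives $e_1(I)-e_1(\fkq)\leq\mathcal{N}(\fkq;R)$. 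With the hypothesis this forces $e_1(I)-e_1(\fkq)=\mathcal{N}(\fkq;R)$ for every $\fkq$ of Step~1. Feeding the equality back, $\ker\varphi_m$ has $k$-dimension $O(m^{d-2})$; but $\bigoplus_m(\fkq^m/\fkq^{m+1})\otimes_{R/\fkq}(I/\fkq)\cong F_\fkq(R)^{\mathcal{N}(\fkq;R)}$ is a free module over the fiber cone $F_\fkq(R)$, which is a polynomial ring $k[X_1,\dots,X_d]$ because $\fkq$ is a parameter ideal, and such a module has no nonzero graded submodule with so small a Hilbert function. Hence each $\varphi_m$ is an isomorphism, $\dim_k(I^{m+1}/\fkq^{m+1})=\binom{m+d-1}{d-1}\mathcal{N}(\fkq;R)$ for all $m\geq0$, and $N:=\bigoplus_{m\geq0}I^{m+1}/\fkq^{m+1}\subseteq\gr_\fkq(R)$ is free of rank $\mathcal{N}(\fkq;R)$ over $F_\fkq(R)$ — in particular a maximal Cohen--Macaulay $\gr_\fkq(R)$-module — with $\gr_\fkq(R)/N\cong\bigoplus_m\fkq^m/I\fkq^m$.

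\textit{Step 3 (endgame, and the main difficulty).} By Proposition~\ref{eminequality}, $e_1(I)-e_1(\fkq)\leq f_0(\fkq)$, so Step~2 yields $\mathcal{N}(\fkq;R)\leq f_0(\fkq)$ for the family of Step~1. I would then invoke the author's characterization of Cohen--Macaulay rings via the index of reducibility and the irreducible multiplicity of parameter ideals~\cite{Tr}: since $f_0(\fkq)\leq\mathcal{N}(\fkq;R)$ always holds for parameter ideals in a high enough power of $\fkm$, this forces $f_0(\fkq)=\mathcal{N}(\fkq;R)$, whence $R$ is Cohen--Macaulay. A more self-contained alternative is to analyse $0\to N\to\gr_\fkq(R)\to\gr_\fkq(R)/N\to0$ with $N$ maximal Cohen--Macaulay, using the Step~2 equalities for the whole family $\{\fkq\}$ to force $\depth\gr_\fkq(R)=d$ — hence $R$ Cohen--Macaulay — or, having produced $e_1(\fkq)=0$, to conclude by \cite[Theorem~1.1]{GGHOPV}; when $d\geq3$ one may also first cut down to dimension $2$ with a superficial element (Fact~\ref{F1}), restoring unmixedness modulo $\H^0_\fkm$ via Fact~\ref{F2}. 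Steps~1--2 are formal; the real obstacle is Step~3, i.e.\ extracting Cohen--Macaulayness from $\mathcal{N}(\fkq;R)\leq f_0(\fkq)$ (equivalently from $e_1(I)-e_1(\fkq)=\mathcal{N}(\fkq;R)$), which is precisely where the input \cite{Tr} — or a delicate control of the cokernel $\gr_\fkq(R)/N$ uniform in $\fkq$ — is required.
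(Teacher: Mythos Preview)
Your Steps~1--2 are correct and give a clean direct proof that $e_1(I)-e_1(\fkq)\le\mathcal N(\fkq;R)$ whenever $I^2=\fkq I$ (via the surjection $\varphi_m$ and $F_\fkq(R)\cong k[X_1,\dots,X_d]$), together with the freeness of $N$. This is a genuinely different entry point from the paper, which never isolates this inequality for a single $\fkq$.

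The gap is Step~3, and you yourself flag it. Two concrete problems:
\begin{itemize}
\item Your key claim ``$f_0(\fkq)\le\mathcal N(\fkq;R)$ always holds for parameter ideals in a high enough power of $\fkm$'' is not justified in the general unmixed setting. It is established only for generalized Cohen--Macaulay rings (Fact~\ref{L3.1}), and the paper proves the analogous inequality $e_1(I)-e_1(\fkq)\le\mathcal N(\fkq;R)$ for \emph{some} $\fkq\subseteq\fkm^n$ only as a \emph{corollary} of the very proposition you are proving (Corollary~\ref{C3.7}). So invoking it here is circular.
\item The alternative ``having produced $e_1(\fkq)=0$'' is not produced: Step~2 gives $e_1(I)-e_1(\fkq)=\mathcal N(\fkq;R)$, not $e_1(I)=\mathcal N(\fkq;R)$. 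Likewise, the short exact sequence $0\to N\to\gr_\fkq(R)\to\gr_\fkq(R)/N\to0$ with $N$ maximal Cohen--Macaulay does \emph{not} force $\depth\gr_\fkq(R)=d$ without control on the cokernel, and you give none.
\end{itemize}
What the appeal to \cite{Tr} would really require is a statement of the form ``$R$ unmixed and $\mathcal N(\fkq;R)\le f_0(\fkq)$ for a cofinal family of parameter ideals $\Rightarrow$ $R$ is Cohen--Macaulay''; but the proof of that in \cite{Tr} proceeds by exactly the inductive descent you are trying to avoid, so you would be outsourcing the whole difficulty.

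The paper's proof is entirely different in architecture: induction on $d$. For $d=2$ the unmixed hypothesis makes $R$ generalized Cohen--Macaulay, and Corollary~\ref{C3.2} finishes. For $d>2$ one chooses a regular element $x$ avoiding both $\Ass R$ and the finite set $\mathcal F=\{\fkp\neq\fkm:\depth R_\fkp=1<\height\fkp\}$ (Lemma~\ref{finitely}), sets $A=R/(x^{n_1})$, checks that $\Ass A\setminus\{\fkm\}=\Assh A$ so that $\overline A=A/\H^0_\fkm(A)$ is again unmixed of dimension $d-1$, transports the hypothesis to $\overline A$ via Facts~\ref{F1} and~\ref{F2}, applies the inductive hypothesis to get $\overline A$ Cohen--Macaulay, and then reads off $\H^i_\fkm(R)=0$ for $1\le i\le d-1$ from the long exact cohomology sequence of $0\to R\xrightarrow{y} R\to A\to 0$. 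The essential idea you are missing is this passage to a hyperplane section that remains (after killing $\H^0_\fkm$) unmixed, which is exactly what the choice of $x$ outside $\mathcal F$ guarantees.
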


In our proof of Proposition \ref{P6.5} the following facts are the key. See \cite[ Section 3]{GN} for
the proof.  
\begin{lem}\label{finitely}
Let $R$ be a homomorphic image of a Cohen-Macaulay local
ring and assume that $\Ass(R)\subseteq \Assh(R)\cup \{\fkm\}$. Then
$$\calF=\{\fkp\in\Spec(R)\mid\height_R(\fkp)> 1={\depth}(R_\fkp)\}$$
is a finite set.
\end{lem}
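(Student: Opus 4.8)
The plan is to detect membership in $\calF$ through the deficiency (local cohomology) modules of $R$ and to use the hypothesis $\Ass R\subseteq\Assh R\cup\{\fkm\}$ to bound the dimensions of their supports. Since $R$ is a homomorphic image of a Cohen-Macaulay local ring, I would write $R=S/\fka$ with $(S,\fn)$ a Gorenstein local ring of dimension $n$ (replacing $S$ by a Gorenstein ring, or passing to the completion, if necessary), and set $K^i:=\Ext^{n-i}_S(R,S)$ for $0\le i\le d$, so that $K^d$ is the canonical module of $R$. Because a quotient of a Gorenstein ring is catenary and, by the hypothesis on $\Ass R$, every minimal prime of $R$ lies in $\Assh R$ (here $d\ge 2$, so no minimal prime equals $\fkm$), the ring $R$ is equidimensional and catenary; hence $\dim R/\fkp=d-\height\fkp$ for every $\fkp\in\Spec R$. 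I will use two standard properties of the $K^i$: first, $\dim_R K^i\le i$; and second, localized local duality, namely $(K^i)_\fkp\ne 0$ if and only if $\H^{\,i-\dim R/\fkp}_{\fkp R_\fkp}(R_\fkp)\ne 0$ for every $\fkp\in\Spec R$.

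First I would translate the condition $\fkp\in\calF$ into the language of the $K^i$. Fixing $\fkp\in\calF$ and putting $m=\dim R/\fkp$, the inequality $\height\fkp\ge 2$ gives $0\le m\le d-2$. By the localized duality, $\min\{\,i:(K^i)_\fkp\ne 0\,\}=m+\depth R_\fkp=m+1$, so $\fkp\in\Supp K^{m+1}$ with $m+1\le d-1$. Thus every prime of $\calF$ of coheight $m$ lies in $\Supp K^{m+1}$ for some $m$ with $0\le m\le d-2$.

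The key step is the sharpened bound $\dim_R K^i\le i-1$ for $1\le i\le d-1$, which is exactly where the hypothesis enters. Suppose $\fkP\in\Supp K^i$ with $\dim R/\fkP=i$ and $1\le i\le d-1$. Then $\fkP\ne\fkm$, and by the duality (with $\dim R/\fkP=i$) the nonvanishing $(K^i)_\fkP\ne 0$ is equivalent to $\H^0_{\fkP R_\fkP}(R_\fkP)\ne 0$, i.e. to $\fkP\in\Ass R$. But $\dim R/\fkP=i<d$ forces $\fkP\notin\Assh R$, so the hypothesis $\Ass R\subseteq\Assh R\cup\{\fkm\}$ together with $\fkP\ne\fkm$ yields $\fkP\notin\Ass R$, a contradiction. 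Hence $\Supp K^i$ contains no prime of coheight $i$ for $1\le i\le d-1$, which gives $\dim_R K^i\le i-1$ in that range.

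Combining the two previous steps finishes the argument. For $\fkp\in\calF$ with $m=\dim R/\fkp$ we have $\fkp\in\Supp K^{m+1}$, $\dim R/\fkp=m$, and $\dim_R K^{m+1}\le m$ by the sharpened bound (note $1\le m+1\le d-1$). Therefore $\fkp$ is a prime of $\Supp K^{m+1}$ of maximal coheight $m=\dim_R K^{m+1}$, so $\fkp$ is a minimal prime of $\Supp K^{m+1}$. As each $K^{m+1}$ has only finitely many minimal primes in its support and $m$ ranges over $\{0,1,\dots,d-2\}$, we obtain
$$\calF\subseteq\bigcup_{m=0}^{d-2}\Min\bigl(\Supp K^{m+1}\bigr),$$
a finite set. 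The main obstacle is the passage from the elementary bound $\dim_R K^i\le i$ to the strict bound $\dim_R K^i\le i-1$ for $i<d$: this is precisely the place that needs the unmixedness hypothesis $\Ass R\subseteq\Assh R\cup\{\fkm\}$ (an embedded prime of coheight $i<d$ would reintroduce a coheight-$i$ point of $\Supp K^i$ and could make $\calF$ infinite), while the technical heart of the setup is verifying the localized local duality in the stated generality.
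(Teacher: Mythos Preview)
Your argument is correct. The paper does not give its own proof of this lemma; it simply refers to \cite[Section~3]{GN} (Goto--Nakamura). Your approach via the deficiency modules $K^i=\Ext_S^{\,n-i}(R,S)$ and localized local duality is precisely the standard one and is essentially the argument in that reference: the hypothesis $\Ass R\subseteq\Assh R\cup\{\fkm\}$ forces $\dim_R K^i\le i-1$ for $1\le i\le d-1$, and then each $\fkp\in\calF$ with $\dim R/\fkp=m$ is a minimal prime of $\Supp K^{m+1}$. One cosmetic remark: you do not need to assume $d\ge 2$ separately, since for $d\le 1$ the set $\calF$ is empty, and for $d\ge 1$ the maximal ideal cannot be a minimal prime anyway.
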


\begin{proof} [of Proposition \ref{P6.5}]

We shall now show the our result by induction on the dimension of $R$. In the case $\dim R=2$, $R$ is a generalized Cohen-Macaulay ring since $R$ is unmixed. It follows from Corollary \ref{C3.2} and $ \mathcal{N}(\frak q;R)\leqslant e_1(I ; R)-e_1(\fkq ; R)$ that $R$ is Cohen-Macaulay.

Suppose that $\dim R>2$ and that our assertion holds true for $\dim R-1$. Let 
$$\mathcal F=\{\frak p \in \text{ Spec} R \mid \frak p \not=\frak m, \dim R_{\frak p}> \text{ depth} R_{\frak p}=1\}.$$
Then by Lemma \ref{finitely}, $\mathcal F$ is a finite set. We choose $x\in \frak m$ such that
$$x\not\in \bigcup\limits_{\frak p \in \text{Ass}R}\frak p\cup\bigcup\limits_{\frak p \in \mathcal F}\frak p.$$
Let $n_1>n$ be an integer such that $x^{n_1}H^1_{\frak m}(R)=0$. Put $y=x^{n_1}$. Let $A=R/(y)$. Then $\dim A=d-1$ and $\text{Ass} A\setminus \{\frak m\}= \text{Assh} A$. Therefore the unmixed component $U_A(0)$ of $0$ in $B$ has finite length, so that
$U_A(0)=\H^0_{\frak m}(A)$. We now take a system $y_2,y_3,\ldots,y_d$ of parameters of $R$-module $A$ and assume that $y_2,y_3,\ldots,y_d$ form a $d$-sequence in $A$. Then since $y$ is an $R$-regular, sequence $y=y_1,y_2,\ldots,y_d$ form $d$-sequence in $R$, whence $y_1$ is a superficial element of $R$ with respect to $\frak q=(y_1,y_2,\ldots,y_d)$. Since $\dim R\geqslant 3$, therefore for all parameter ideal $\frak q^\prime=(y_2,y_3,\ldots,y_d)\subseteq \frak m^n$ of $A$ which $y_2,y_3,\ldots,y_d$ is $d$-sequence, it follows from Fact \ref{F1} we have $\mathcal N(\frak q^\prime A;A)=\mathcal N(\frak q;R)\leqslant e_1(I;R)-e_1(\fkq;R)=e_1(I^\prime;A)-e_1(\fkq^\prime;A)$, where $I^\prime=\fkq^\prime A:_A\fkm A=(\fkq :_R\fkm)A$. Let $W=\H^0_\fkm(A)$ and $\overline{A}=A/ W$, and $\frak n=\frak mA$. By Fact \ref{F2} 2), that we can choose an integer $n_0>n$ such that 
for all parameters ideals $\frak q^\prime \subseteq \frak n^{n_0}$, we have
 $\mathcal{N}(\frak q^\prime ;A)=\mathcal{N}(\frak q^\prime ;\overline{A})+\ell(0:_A\frak n)$
and $  e_1(I^\prime;A)-e_1(\fkq^\prime;A)=e_1(\fkq^\prime \overline{A}:\fkn;\overline{A})-e_1(\fkq^\prime \overline{A};\overline{A})$.  Let $n^\prime >n_0$ be an integer such that $\frak mA\cap \H^0_\frak m(A)=0$. Let $y_2,y_3,\ldots,y_d$ be a system of parameters of $R$-module $\overline{A}$ such that $(y_2,y_3,\ldots,y_d)\subseteq \frak m^{n^\prime}$ and assume that $y_2,y_3,\ldots,y_d$ is a $d$-sequence in $\overline{A}$. Then because $(y_2,y_3,\ldots,y_d)A\cap W=0$, we have 
$y_2,y_3,\ldots,y_d$ form a $d$-sequence in $A$. 
Therefore,  we have $$\mathcal N(\fkq^\prime \overline{A};\overline{A})\leqslant e_1(\fkq^\prime \overline{A}:\fkn;\overline{A})-e_1(\fkq^\prime \overline{A};\overline{A}),$$
where $\fkq^\prime=(y_2,y_3,\ldots,y_d)$.
By hypothesis of induction on $d$, we have $\overline{A}$ is Cohen-Macaulay. Thus $\H^i_\frak m(A)=0$ for all $i\not=1,d$. It follows from the following sequence 
$$\xymatrix{0\ar[r]&R\ar[r]^{.y}&R\ar[r]&A\ar[r]&0}$$
that we have the long exact sequence
$$\xymatrix{\ldots\ar[r]&H^1_\fkm(R)\ar[r]^{.y}&H^1_\fkm(R)\ar[r]&H^{1}_\fkm(A)\ar[r]&\ldots}$$
$$\xymatrix{\ldots\ar[r]&H^i_\fkm(R)\ar[r]^{.y}&H^i_\fkm(R)\ar[r]&H^{i}_\fkm(A)\ar[r]&\ldots}.$$
Then we have $H^i_\frak m(R)=0$ for all $2\leqslant i\leqslant d-1$ and $H^1_\frak m(R)=y H^1_\frak m(R)$. Thus $H^1_\frak m(R)=0$  because $H^1_\frak m(R)$ is a finite generated $R$-module. Therefore $R$ is Cohen-Macaulay.
\end{proof}

\medskip

\begin{theorem}\label{T6.6}
Let $R$ be a Noetherian local ring with maximal ideal $\frak m$, $d=\dim R\geqslant 2$. Assume that  $R$ is unmixed, that is $\dim \hat R/\fkp=d$ for all $\fkp\in \Ass(\hat R)$. The following statements are equivalent.

\begin{itemize}
\item[$(i)$] $R$ is Cohen-Macaulay.
\item[$(ii)$] For all parameter ideals $\frak q\subseteq\fkm^2$, we
    have
$$\mathcal{N}(\frak q;R) =e_1(I)-e_1(\fkq),$$
where $I=\fkq:\fkm$.
\item[$(iii)$] For all parameter ideals $\frak q\subseteq\fkm^2$, we
    have
$$\mathcal{N}(\frak q;R) \leqslant e_1(I)-e_1(\fkq),$$
where $I=\fkq:\fkm$.

\end{itemize}

\end{theorem}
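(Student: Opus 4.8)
The plan is to prove the implications $(i)\Rightarrow(ii)\Rightarrow(iii)\Rightarrow(i)$. The middle implication is immediate, since an equality implies the corresponding inequality. The two remaining directions are of quite different character: $(i)\Rightarrow(ii)$ is a short length computation resting on results already recorded in Sections~2 and 3, whereas $(iii)\Rightarrow(i)$ is essentially a repackaging of Proposition~\ref{P6.5}.

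For $(i)\Rightarrow(ii)$ I would fix a parameter ideal $\fkq\subseteq\fkm^2$ and set $I=\fkq:\fkm$. Since $R$ is Cohen-Macaulay, $e_1(\fkq)=0$, so it suffices to prove $\calN(\fkq;R)=e_1(I)$. By the definition recalled in Section~2 one has $\calN(\fkq;R)=\ell_R([\fkq:\fkm]/\fkq)=\ell_R(I/\fkq)$. As $\fkq\subseteq\fkm^2$, \cite[Theorem~2.2]{CP} gives $I^2=\fkq I$, so that $\fkq$ is a reduction of $I$ with reduction number at most one; since $R$ is Cohen-Macaulay and $R/\fkm$ is infinite, the theorem of Huneke and Ooishi (cf.\ \cite[Theorem~6.1]{CGT}) then yields $e_1(I)=\ell_R(I/\fkq)$. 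Combining these, $\calN(\fkq;R)=e_1(I)=e_1(I)-e_1(\fkq)$. (Alternatively one may quote Theorem~\ref{T0.1} together with Theorem~1.1 of \cite{Tr}.)

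For $(iii)\Rightarrow(i)$, observe that $\fkm^n\subseteq\fkm^2$ for every $n\geqslant2$, and that every parameter ideal contained in $\fkm^2$ is generated by a system of parameters lying in $\fkm^2$. Hence the assumption in $(iii)$ — that $\calN(\fkq;R)\leqslant e_1(I)-e_1(\fkq)$ for all parameter ideals $\fkq\subseteq\fkm^2$, where $I=\fkq:\fkm$ — is precisely the hypothesis of Proposition~\ref{P6.5} with $n=2$. That proposition then gives that $R$ is Cohen-Macaulay, closing the cycle.

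The genuine obstacle, therefore, is Proposition~\ref{P6.5}, which the excerpt already establishes by induction on $d=\dim R$: for $d=2$ an unmixed local ring is generalized Cohen-Macaulay and one concludes via Corollary~\ref{C3.2}; for the inductive step one cuts by a superficial element $y=x^{n_1}$ chosen outside $\Ass(R)$ and outside the finite locus $\calF$ of Lemma~\ref{finitely}, transfers the inequality to $\overline{A}=A/\H^0_\fkm(A)$ with $A=R/yR$ using Facts~\ref{F1} and~\ref{F2}, applies the induction hypothesis to get $\overline{A}$ Cohen-Macaulay, and finally feeds $0\to R\xrightarrow{\,y\,}R\to A\to0$ into the long exact sequence of local cohomology and invokes Nakayama's lemma to conclude $\H^i_\fkm(R)=0$ for $i\neq d$. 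Granting all this, the proof of Theorem~\ref{T6.6} is merely the assembly just described.
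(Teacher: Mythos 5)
Your proposal is correct and follows essentially the same route as the paper: $(i)\Rightarrow(ii)$ by the length computation $\calN(\fkq;R)=\ell_R(I/\fkq)$ combined with $I^2=\fkq I$ from \cite[Theorem 2.2]{CP} and the Huneke--Ooishi equality, $e_1(\fkq)=0$ in the Cohen-Macaulay case, and $(iii)\Rightarrow(i)$ by invoking Proposition \ref{P6.5}. The only difference is cosmetic: you make explicit that the hypothesis of $(iii)$ instantiates Proposition \ref{P6.5} with $n=2$, which the paper leaves implicit (and your form $e_1(I)=\ell_R(I/\fkq)$ is the correct reading of the equality the paper states with a sign slip).
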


\begin{proof}
%Note that if $\dim R=1$ and $R$ is unmixed then $R$ is Cohen-Macaulay. Therefore, by Theorem \ref{C6.1}, the our result hold true for the case $\dim R=1$. Hence we shall now show the our result with $\dim R\ge 2$. 
(i) $\Rightarrow$ (ii). Let $\fkq$ be a parameter ideal of $R$ such that $\fkq\subseteq \fkm^2$. Put $I=\fkq:\fkm$. Since $R$ is Cohen-Macaulay, we have $\calN(\fkq;R)=\ell(I/\fkq)$ and $e_1(\fkq)=0$. Note that $\fkq\subseteq\fkm^2$, by \cite[Theorem 2.2]{CP}, we have $I^2=\fkq I$. It follows from $R/\fkm$ is infinite that $e_1(I)=\ell(R/I)-e_0(\fkq)$ by Huneke and Ooishi(\cite{Hu}, \cite{O} or cf. \cite[Theorem 6.1]{CGT}). Since $R$ is Cohen-Macaulay, we have $\calN(\fkq;R)=\ell(I/\fkq)=e_1(I)-e_1(\fkq)$.

 (ii) $\Rightarrow$ (iii)  and (iii) $\Rightarrow$ (iv) are trivial.
(iv) $\Rightarrow$ (i) follows from Proposition \ref{P6.5}.

\end{proof}

\begin{cor}\label{C3.7}
Let $R$ be a Noetherian local ring with $d=\dim R\ge2$. Assume that  $R$ is unmixed, that is $\dim \hat R/\fkp=d$ for all $\fkp\in \Ass(\hat R)$. Then  for all  integers $n$  there exists  a parameter ideal $\frak q\subseteq \frak m^n$, we
    have
$$ e_1(I;R)-e_1(\fkq;R)\leqslant  \mathcal{N}(\frak q;R),$$
where $I=\fkq:\fkm$.
\end{cor}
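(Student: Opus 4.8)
The plan is to obtain Corollary~\ref{C3.7} as a soft consequence of Theorem~\ref{T6.6} and Proposition~\ref{P6.5}, by a contradiction argument, so that no new estimate is required. First I would reduce to the case $n\ge 2$: since $\fkm^{n}\subseteq\fkm^{2}$ for $n\ge 2$, any parameter ideal contained in $\fkm^{n}$ is automatically contained in $\fkm^{2}$, and a parameter ideal lying in a prescribed power of $\fkm$ always exists (for instance, replace a system of parameters $y_{1},\dots,y_{d}$ of $R$ by $y_{1}^{n},\dots,y_{d}^{n}$); so it suffices to prove the statement for all $n\ge 2$, the remaining cases following since a parameter ideal inside $\fkm^{2}$ lies in $\fkm$ and in $R$.

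Now suppose the assertion fails for some such $n$; that is, every parameter ideal $\fkq\subseteq\fkm^{n}$ satisfies $e_{1}(I)-e_{1}(\fkq)>\mathcal{N}(\fkq;R)$, where $I=\fkq:\fkm$. In particular $\mathcal{N}(\fkq;R)\le e_{1}(I)-e_{1}(\fkq)$ for all parameter ideals $\fkq\subseteq\fkm^{n}$, which is precisely the hypothesis of Proposition~\ref{P6.5}; hence $R$ is Cohen-Macaulay. But then the implication $(i)\Rightarrow(ii)$ of Theorem~\ref{T6.6}, applied to any parameter ideal $\fkq\subseteq\fkm^{n}\subseteq\fkm^{2}$, gives $\mathcal{N}(\fkq;R)=e_{1}(I)-e_{1}(\fkq)$, contradicting the strict inequality just assumed. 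Therefore for every $n$ there is a parameter ideal $\fkq\subseteq\fkm^{n}$ with $e_{1}(I)-e_{1}(\fkq)\le\mathcal{N}(\fkq;R)$, $I=\fkq:\fkm$, which is the assertion.

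I do not expect a genuine obstacle here: all the weight is already carried by Proposition~\ref{P6.5} and Theorem~\ref{T6.6}, and the only point to handle carefully is the quantifier bookkeeping---the corollary is an existence statement, so negating it yields a ``for all deep $\fkq$'' hypothesis, which is exactly what Proposition~\ref{P6.5} takes as input. A more hands-on alternative would be to use Lemma~\ref{L2.4} to pick $\fkq\subseteq\fkm^{n}$ with $I^{2}=\fkq I$ and then apply Proposition~\ref{eminequality} to get $e_{1}(I)-e_{1}(\fkq)\le f_{0}(\fkq)$, reducing the claim to $f_{0}(\fkq;R)\le\mathcal{N}(\fkq;R)$; this last inequality is clear in the generalized Cohen-Macaulay case via Fact~\ref{L3.1} but would require a separate argument for a general unmixed $R$, so I would present the contradiction route above instead.
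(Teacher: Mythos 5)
Your proof is correct and is essentially the paper's argument: the paper's proof is the one-line citation of Theorem~\ref{T6.6}, and your contradiction argument (negate the existence statement for a fixed $n$, feed the resulting ``for all $\fkq\subseteq\fkm^{n}$'' inequality into Proposition~\ref{P6.5} to conclude $R$ is Cohen--Macaulay, then contradict via $(i)\Rightarrow(ii)$ of Theorem~\ref{T6.6}) is just that citation spelled out. If anything, you handle the quantifiers more precisely than the paper, since for the non-Cohen--Macaulay case one genuinely needs Proposition~\ref{P6.5} (arbitrary $n$) rather than condition $(iii)$ of Theorem~\ref{T6.6} alone, which only concerns parameter ideals in $\fkm^{2}$.
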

\begin{proof}
The result follows  from Theorem \ref{T6.6}.
\end{proof}

Let us note the following example of parameter ideals $\fkq$ in non-Cohen-Macaulay local rings $R$
with $\depth R = d - 1$, for which one has $e_1(I)-e_1(\fkq)=\calN(\fkq;R)$, where $I=\fkq:\fkm$. 

\begin{example}\rm{(\cite[Section 4]{GSa1})

Let $2 \leq d \leq m$ be integers. 
Let $A = k[X_1,X_2,\ldots,X_m, V,Z_1,Z_2, \ldots ,Z_d]$
be the polynomial ring with $m + d + 1$ indeterminates over a field $k$ and let
$$\fkb = (X_i \mid 1 \leq i \leq m -1)^2 + (X_2^m) + (X_iV \mid 1 \leq i \leq m) + (V^2 -
\sum\limits_{i=1}^dX_iZ_i).$$
We put $C= A /\fkb$. 
Let $M = C_+ = (x_1, x_2, \ldots, x_m) + (v) + (a_1, a_2,\ldots , a_d)$ be the
graded maximal ideal in $C$, where $x_i, v$, and $a_j$ denote the images $X_i, V$, and $Z_j$ in $C$,
respectively. Then $C$ is a $d$-dimensional graded non-Cohen-Macaulay ring with $\depth C = d - 1$
and $\ell(H^{d-1}_\fkm(C) = 1$ (\cite[Theorem 4.5]{GSa1}). We put $\fkq = (a_1, a_2,\ldots , a_d)$. Then $M^2 = \fkq M$,
whence $\fkq$ is a reduction of $M$ and $a_1, a_2, \ldots, a_d$ is a homogeneous system of parameters
for the graded ring $C$. Let $J = \fkq : M$. We then have $J^3 = \fkq J^2$ and $\ell_C(J^2/\fkq J) = 1$ (\cite[Proposition 4.7]{GSa1}). Let $R = C_M$, $I = JR$, and $Q = \fkq R$. Then since $\ell(H^{d-1}_\fkm(R) = 1$ and $\depth R=d-1$ we have
$$e_i(\frak q;R)=\begin{cases} 2m&\text{if $i=0$,  }\\
-1&\text{if  $i=1$,}\\
0&\text{if  $2\leq i\leq d$.}\\
 \end{cases}
$$
%and
%$$\ell(R/\fkq^{n+1})=2m\binom{n+d}{d}-(-1)\binom{n+d-1}{d-1}.$$
Moreover, we have $$\ell(R/I^{n+1})=2m\binom{n+d}{d}-(m-2)\binom{n+d-1}{d-1},$$
and so that $e_1(I)=m-2$ and $\calN(\fkq;R)=\ell(I/\fkq)=\ell(R/\fkq)-\ell(R/I)=2m+1-(m+2)=m-1$. Therefore we have
$$e_1(I)-e_1(\fkq)=\calN(\fkq;R),$$
as required.}
\end{example}

%%%%%%%%%%%%%%%%%%%%%%%%%%%%%%%%%%%%%%%%%%%%%%%%%%%
%%%%%%%%%%%%%%%%%%%%%%%%%%%%%%%%%%%%%%%%%%%%%%%%%%%
%%%%%%%%%%%%%%%%%%%%%%%%%%%%%%%%%%%%%%%%%%%%%%%%%%%

\section{The Cohen-Macaulay type}
In this section, we give the characterizations of a Cohen-Macaulay ring in term of its Chern coefficient and the Cohen-Macaulay type. As corollaries, we obtained the characterizations of a Gorenstein ring in term of its Chern coefficient. In order to give the proof of the main theorem, we begin the following result.   
\begin{lemma}\label{L4.0}
Let $R$ be a Noetherian local ring with $\dim R=1$. Assume that  $\frak q=(x)$ be a standard parameter ideal of $R$ such that $I^2=\fkq I$, where $I=\fkq:\fkm$ and
 $$\calN(\fkq;R) = r_1(R)+r_0(R).$$ Then we have
$$e_1(I)-e_1(\fkq)= f_0(\fkq)- r_0(R),$$
where $r_0(R)=\ell((0):_R\fkm)$.
\end{lemma}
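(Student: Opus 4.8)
The plan is to pass to the maximal Cohen--Macaulay quotient and do all length bookkeeping there. Set $W=\H^0_\fkm(R)$ and $\overline R=R/W$. Since $\dim R=1$, the ring $\overline R$ is Cohen--Macaulay of dimension one and the image $\overline x$ of $x$ is a nonzerodivisor on $\overline R$. The hypothesis that $\fkq=(x)$ is standard means, in dimension one, precisely that $xW=0$; combined with $\overline x$ being a nonzerodivisor this forces $xR\cap W=0$ (if $xr\in W$ then $\overline x\,\overline r=0$ in $\overline R$, so $r\in W$ and $xr\in xW=0$). I will use $xR\cap W=0$ repeatedly to transfer lengths between $R$ and $\overline R$.

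First I would compute the two Chern coefficients. From $0\to W\to R\to \overline R\to 0$ and $W\cap\fkq^{n+1}\subseteq W\cap\fkq=0$ one gets $\ell(R/\fkq^{n+1})=\ell(\overline R/\overline\fkq^{\,n+1})+\ell(W)$ for all $n\ge 0$, and since $\overline R$ is one-dimensional Cohen--Macaulay this yields $e_1(\fkq;R)=-\ell(W)$. For $I$: the equality $I^2=\fkq I$ gives $I^{n+1}=\fkq^n I\subseteq\fkq$ for $n\ge 1$, so $W\cap I^{n+1}=0$ and hence $\ell(R/I^{n+1})=\ell(\overline R/\overline I^{\,n+1})+\ell(W)$ for $n\ge 1$; moreover $\overline I^{\,2}=\overline\fkq\,\overline I$, and the reduction-number-one computation in the one-dimensional Cohen--Macaulay ring $\overline R$ (using that $\overline I^{\,n+1}=x^n\overline I$ contains $x^{n+1}\overline R=x^n\overline\fkq$, with multiplication by $x^n$ injective) gives $\ell(\overline R/\overline I^{\,n+1})=(n+1)e_0-\ell(\overline I/\overline\fkq)$ for $n\ge 1$, so $e_1(\overline I;\overline R)=\ell(\overline I/\overline\fkq)$ and $e_1(I;R)=\ell(\overline I/\overline\fkq)-\ell(W)$. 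Subtracting, $e_1(I)-e_1(\fkq)=\ell(\overline I/\overline\fkq)$.

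Next I would identify $\ell(\overline I/\overline\fkq)$. Since $\fkq\subseteq I$, the modular law gives $\overline I/\overline\fkq\cong I/(\fkq+(I\cap W))$, hence $\ell(\overline I/\overline\fkq)=\ell(I/\fkq)-\ell\big((I\cap W)/(\fkq\cap W)\big)$. Here $\fkq\cap W=xR\cap W=0$, and for $w\in I\cap W$ one has $\fkm w\subseteq W$ (as $W$ is an ideal) and $\fkm w\subseteq\fkq$, so $\fkm w\subseteq W\cap\fkq=0$; thus $I\cap W=(0):_R\fkm$, of length $r_0(R)$. Therefore $\ell(\overline I/\overline\fkq)=\ell(I/\fkq)-r_0(R)=\calN(\fkq;R)-r_0(R)=r_1(R)$, using the hypothesis $\calN(\fkq;R)=r_0(R)+r_1(R)$. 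On the other side, Fact~\ref{F2}(3) gives $f_0(\fkq;R)=f_0(\overline\fkq;\overline R)+r_0(R)$, and in the one-dimensional Cohen--Macaulay ring $\overline R$ one has $f_0(\overline\fkq;\overline R)=r(\overline R)=\dim_{R/\fkm}\big((0):_{\H^1_\fkm(\overline R)}\fkm\big)=r_1(R)$, the last equality because $\H^1_\fkm(R)\cong\H^1_\fkm(\overline R)$ (the connecting maps kill $\H^{\ge 1}_\fkm(W)=0$). Hence $f_0(\fkq)-r_0(R)=r_1(R)=e_1(I)-e_1(\fkq)$, which is the claim.

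The length manipulations are routine; the one place the hypotheses are genuinely used is the identification $I\cap W=(0):_R\fkm$, where standardness of $\fkq$ (i.e.\ $xW=0$, giving $W\cap\fkq=0$) is essential — without it one only gets $I\cap W=(xW:_W\fkm)$ and the final quantity would involve $\dim_{R/\fkm}\Soc(W/xW)$ rather than $r_0(R)$. The other point to be careful about is that the reduction-number-one Hilbert-function computation in $\overline R$ really does follow from $I^2=\fkq I$; I do not expect any difficulty there, since $\overline\fkq$ is principal and generated by a nonzerodivisor of $\overline R$.
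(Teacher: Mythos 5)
Your argument is correct, but it follows a genuinely different route from the paper's. The paper works entirely inside $R$: it uses the hypothesis $\calN(\fkq;R)=r_0(R)+r_1(R)$ to split the socle along $0\to \H^0_\fkm(R)\to R/(x)\to R/((x)+\H^0_\fkm(R))\to 0$, propagates this to the colon identity $\fkq^{n+1}:_R\fkm=\fkq^n(\fkq:_R\fkm)+((0):_R\fkm)$ for all $n$, and then compares $\ell(R/\fkq^{n+1})-\ell(R/I^{n+1})=\ell(\fkq^nI/\fkq^{n+1})$ with $\ell((\fkq^{n+1}:\fkm)/\fkq^{n+1})-r_0(R)$ before passing to Hilbert polynomials. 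You instead reduce to the Cohen--Macaulay quotient $\overline R=R/\H^0_\fkm(R)$ and evaluate each side in closed form: $e_1(I)-e_1(\fkq)=\ell(\overline I/\overline\fkq)=\calN(\fkq;R)-r_0(R)$, which is the only place the socle hypothesis enters, and $f_0(\fkq)-r_0(R)=f_0(\overline\fkq;\overline R)=r(\overline R)=r_1(R)$ via Fact~\ref{F2}(3) together with Northcott's constancy of the index of reducibility of parameter ideals in a Cohen--Macaulay ring (in dimension one $\overline\fkq^{\,n}=(\overline x^{\,n})$ is again a parameter ideal, which is what makes $f_0(\overline\fkq;\overline R)=r(\overline R)$ immediate). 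Your key identifications --- $xW=0$ from standardness in dimension one, hence $xR\cap W=0$, and $I\cap W=(0):_R\fkm$ --- are all valid and are exactly where the hypotheses are used; the appeal to Fact~\ref{F2}(3) is also safe under the lemma's hypotheses, since it can be obtained from Fact~\ref{F2}(2) applied to $\fkq^n\subseteq\fkm^{n_0}$ for $n\gg 0$. As for what each approach buys: yours produces the explicit common value $r_1(R)$ of both sides, which is precisely the form used in the induction of Lemma~\ref{L4.1}, at the cost of invoking the asymptotic splitting of Fact~\ref{F2} and the identification of $f_0$ with the Cohen--Macaulay type in the quotient; the paper's proof is more self-contained at the level of powers of $\fkq$ and yields the colon identity $\fkq^{n+1}:\fkm=\fkq^n(\fkq:\fkm)+(0):\fkm$ itself, which is of independent use.
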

\begin{proof}
First, we shall show that $\frak q^{n+1} :_R \frak m = \frak q^n (\frak q :_R \frak m) + ((0) :_R \frak m)$ for all $n\ge 0$. Indeed, the case $n = 0$ is trivial so we can assume that $n \ge 1$. Let $a \in (x^{n+1}) : \frak m$. Since $(x^{n+1}): \frak m \subseteq (x^{n+1}): x = (x^n) + H^0_{\frak m}(R)$, we have $a = x^nb + c$ for some $b \in R$ and $c \in H^0_{\frak m}(R)$. Since $\frak ma \subseteq (x^{n+1})$ and $\frak m x^nb \subseteq (x^n)$ we have $\frak mc \subseteq (x) \cap H^0_{\frak m}(R) = 0$. Thus $c \in (0):_R \frak m$. Therefore $x^n \frak m b  = \frak m a \subseteq (x^{n+1})$. Hence $\frak mb \subseteq (x) + H^0_{\frak m}(R)$. Since $(x)\cap H^0_\frak m(R)=0$, we have the following exact sequence
 $$0\to H^0_\frak m(R)\to R/(x)\to R/((x)+H^0_\frak m(R))\to 0.$$
It follows from $\calN(\fkq;R) = r_1(R)+r_0(R)$  that the sequence
$$0\to (0):_R\frak m\to ((x):\frak m)/(x)\to (((x)+H^0_\frak m(R)):\frak m)/((x)+H^0_\frak m(R))\to 0$$
is exact. Therefore $ b \in ((x) + H^0_{\frak m}(R)):_R \frak m = ((x):_R \frak m) + H^0_{\frak m}(R))$. Thus $b = d + e$ with some $d \in (x):_R \frak m$ and $e \in H^0_{\frak m}(R)$. In conclusion $a = x^{n}(d+e) + c = x^n d + c \in x^n((x):_R \frak m) + (0):_R \frak m$. Hence we have $(x^{n+1}) :_R \frak m \subseteq x^n((x):_R \frak m) + (0):_R \frak m$ as desired.\\
Since $I^2=\fkq I$, we have $I^{n+1}=\fkq^n I$ for all $n\ge 1$. Since $\fkq\cap H^0_\frak m(R)=0$ and $\frak q^{n+1} :_R \frak m = \frak q^n (\frak q :_R \frak m) + ((0) :_R \frak m)$, we have the following exact sequence
 $$0\to (0):_R \fkm\to \frak q^{n+1} :_R \frak m/\fkq^{n+1}\to (\frak q^n (\frak q :_R \frak m))/\fkq^{n+1}\to 0.$$
Thus, we have
$$\ell(R/\fkq^{n+1})-\ell(R/I^{n+1})=\ell((\fkq^n(\fkq:\fkm))/\fkq^{n+1})= \ell((\fkq^{n+1}:\fkm)/\fkq^{n+1})-\ell((0):_R\fkm).$$
Since $I^2=\fkq I$, we have $e_0(\fkq)=e_0(I)$. Therefore $e_1(I)-e_1(\fkq)= f_0(\fkq)- r_0(R)$.

\end{proof}

\begin{lemma}\label{L4.1}
Let $R$ be a generalized Cohen-Macaulay
ring of dimension $d\geqslant 2$. Assume that  $\frak q=(x_1,x_2,\ldots,x_d)$ be a standard parameter ideal of $R$ such that
$I^2=\fkq I$, where $I=\fkq:\fkm$ and
$$\calN(\frak q;R) = \sum_{i=0}^d \binom{d}{i}r_i(R).$$
 Then we have
$$e_1(I)-e_1(\fkq)= f_0(\fkq)=\sum_{j=1}^{d} \binom{d-1}{j-1} r_j(R).$$

\end{lemma}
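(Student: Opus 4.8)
The plan is to induct on $d=\dim R$, reducing modulo one general element of $\fkq$; the base case $d=2$ rests on Lemma \ref{L4.0} (via its one‑dimensional output), the step $d\ge 3$ on Lemma \ref{L4.1} itself, and in every case the inequality $e_1(I)-e_1(\fkq)\le f_0(\fkq)$ of Proposition \ref{eminequality} will be played against the inequalities of Fact \ref{L3.1}(2)(b) until everything is forced to be an equality.

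First I would set up the descent. Since $R/\fkm$ is infinite, choose the generators $x_1,\ldots,x_d$ of $\fkq$ generally, so that simultaneously: $x_1$ is superficial for both $I$ and $\fkq$ (Fact \ref{F1}); $R':=R/(x_1)$ is a generalized Cohen–Macaulay ring of dimension $d-1$; $\fkq':=\fkq R'$ is a standard parameter ideal of $R'$; and $r_i(R')=r_i(R)+r_{i+1}(R)$ for all $i\ge 0$ (Fact \ref{L3.1}(2)(a)). Because $x_1\in\fkq\subseteq I=\fkq:\fkm$, one has $IR'=I/(x_1)$ and $\fkq' :_{R'}\fkm=\{\bar a: \fkm a\subseteq \fkq+(x_1)=\fkq\}=I/(x_1)$, so $I':=\fkq':_{R'}\fkm=IR'$; reducing $I^2=\fkq I$ modulo $(x_1)$ gives $I'^2=\fkq'I'$; and $\calN(\fkq';R')=\ell_{R'}(I'/\fkq')=\ell_R(I/\fkq)=\calN(\fkq;R)$. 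Substituting $r_i(R')=r_i(R)+r_{i+1}(R)$ into $\sum_{i=0}^{d}\binom{d}{i}r_i(R)$ and collapsing with Pascal's identity then shows
$$\calN(\fkq';R')=\begin{cases} r_0(R')+r_1(R') & \text{if } d=2,\\ \sum_{i=0}^{d-1}\binom{d-1}{i}r_i(R') & \text{if } d\ge 3,\end{cases}$$
so $\fkq'$ meets the hypotheses of Lemma \ref{L4.0} (when $d=2$) resp. of Lemma \ref{L4.1} in $R'$ (when $d\ge 3$). Finally Fact \ref{F1} gives $e_1(I)-e_1(\fkq)=e_1(I')-e_1(\fkq')$.

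For the step $d\ge 3$, the induction hypothesis applied to $R'$ yields $e_1(I')-e_1(\fkq')=f_0(\fkq';R')=\sum_{j=1}^{d-1}\binom{d-2}{j-1}r_j(R')$. Then Fact \ref{L3.1}(2)(b) (case $d\ge 3$) gives $f_0(\fkq;R)\le f_0(\fkq';R')$, Proposition \ref{eminequality} gives $e_1(I)-e_1(\fkq)\le f_0(\fkq;R)$, and since $e_1(I)-e_1(\fkq)=e_1(I')-e_1(\fkq')=f_0(\fkq';R')$, these squeeze to $f_0(\fkq;R)=f_0(\fkq';R')=e_1(I)-e_1(\fkq)$; a second use of Pascal's identity rewrites $\sum_{j=1}^{d-1}\binom{d-2}{j-1}r_j(R')$ as $\sum_{j=1}^{d}\binom{d-1}{j-1}r_j(R)$, which is the assertion. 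For the base case $d=2$, Lemma \ref{L4.0} applied to the one‑dimensional $R'$ gives $e_1(I')-e_1(\fkq')=f_0(\fkq';R')-r_0(R')$, so the crux becomes the auxiliary claim $f_0(\fkq';R')=\calN(\fkq';R')=r_0(R')+r_1(R')$. I would establish this by proving that $\calN(\fkq'^{\,n+1};R')$ is constant for $n\ge 1$: the argument inside the proof of Lemma \ref{L4.0} gives, for $n\ge 1$, an exact sequence from which $\calN(\fkq'^{\,n+1};R')=\ell_{R'}(x_2^{n}I'/x_2^{n+1}R')+r_0(R')$, where $x_2$ denotes the image of the generator of $\fkq'=(x_2)R'$; and the natural surjection $I'/\fkq'\twoheadrightarrow x_2^{n}I'/x_2^{n+1}R'$, $\bar a\mapsto\overline{x_2^{n}a}$, has kernel $\bigl((\fkq'+(0:_{R'}x_2^{n}))\cap I'\bigr)/\fkq'$, which equals $(\fkq'+(0:_{R'}\fkm))/\fkq'$ of length $r_0(R')$. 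Here one uses $(0:_{R'}x_2^{n})=\H^0_\fkm(R')$ for all $n\ge 1$ (valid because $\fkq'$ is standard, so $x_2\H^0_\fkm(R')=0$, while $(0:_{R'}x_2^{n})$ has finite length as $x_2$ is a parameter of the one‑dimensional $R'$), together with $\fkq'\cap\H^0_\fkm(R')=0$ and the ensuing identity $(\fkq'+\H^0_\fkm(R'))\cap I'=\fkq'+(0:_{R'}\fkm)$. Hence $\calN(\fkq'^{\,n+1};R')=\ell_{R'}(I'/\fkq')$ for $n\ge1$, proving the claim; consequently $e_1(I)-e_1(\fkq)=e_1(I')-e_1(\fkq')=r_1(R')=r_1(R)+r_2(R)$, and this value is trapped between $e_1(I)-e_1(\fkq)\le f_0(\fkq;R)\le f_0(\fkq';R')-(r_0(R)+r_1(R))=r_1(R)+r_2(R)$ by Proposition \ref{eminequality} and Fact \ref{L3.1}(2)(b) (case $d=2$), forcing $f_0(\fkq;R)=r_1(R)+r_2(R)=e_1(I)-e_1(\fkq)$.

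The step I expect to be the main obstacle is the base case $d=2$, and within it the one‑dimensional identity $f_0(\fkq';R')=\calN(\fkq';R')$: it depends on the fine description of $\fkq'^{\,n+1}:_{R'}\fkm$ extracted from Lemma \ref{L4.0}, and genuinely uses the hypothesis $\calN(\fkq';R')=r_0(R')+r_1(R')$ (without which even the decomposition $\fkq'^{\,n+1}:_{R'}\fkm=\fkq'^{\,n}I'+(0:_{R'}\fkm)$ may fail). A secondary matter requiring care — but not real difficulty, given $R/\fkm$ infinite and the cited results — is arranging that a single general $x_1\in\fkq$ simultaneously delivers superficiality for $I$ and $\fkq$, standardness of $\fkq'$ in $R'$, and the socle recursion $r_i(R')=r_i(R)+r_{i+1}(R)$ of Fact \ref{L3.1}(2)(a).
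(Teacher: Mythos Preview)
Your argument is correct and follows the same inductive scheme as the paper: pass to $R'=R/(x_1)$, carry $e_1(I)-e_1(\fkq)$ down via Fact~\ref{F1}, and squeeze Proposition~\ref{eminequality} against Fact~\ref{L3.1}(2)(b) until equality is forced; the inductive step $d\ge 3$ is literally the same. The one place you diverge is the base case $d=2$, in how you obtain $f_0(\fkq';R')=r_0(R')+r_1(R')$. The paper gets this in a single stroke from Fact~\ref{F2}(3): since $\dim R'=1$ one has $f_0(\fkq';R')=f_0(\overline{\fkq'};\overline{R'})+r_0(R')$, and $\overline{R'}=R'/\H^0_\fkm(R')$ is Cohen--Macaulay of dimension one, so $f_0(\overline{\fkq'};\overline{R'})=\calN(\overline{\fkq'};\overline{R'})=r_1(\overline{R'})=r_1(R')$. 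You instead re-open the proof of Lemma~\ref{L4.0} and compute $\calN(\fkq'^{\,n+1};R')$ directly to show it is constant. That computation is valid, but what you identify as the ``main obstacle'' disappears once Fact~\ref{F2}(3) is invoked; on the other hand, your write-up is more explicit than the paper about arranging $x_1$ to be simultaneously superficial and to satisfy Fact~\ref{L3.1}(2)(a), which the paper leaves implicit.
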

\begin{proof}
Let $R^\prime =R/(x_1)$, $\frak q^\prime=\frak q/(x_1)$, $I^\prime=I/(x_1)$ and $\fkm^\prime=\fkm/(x_1)$. We shall now show the our result by induction on the dimension of $R$. In the case $\dim R=2$. Since $\dim R^\prime=1$ and $\fkq^\prime$ is a parameter ideal of $R^\prime$, we have
$$e_1(I^\prime)-e_1(\fkq^\prime)= f_0(\fkq^\prime)- r_0(R^\prime).$$
Because of Fact \ref{F2} (3), we have $f_0(\fkq^\prime)=r_0(R^\prime)+r_1(R^\prime)$.
It follows from Fact \ref{L3.1} (2), Fact \ref{F1} and Proposition \ref{eminequality} that we have
$$e_1(I^\prime)-e_1(\fkq^\prime)=e_1(I)-e_1(\fkq)\le f_0(\fkq)\leqslant f_0(\fkq^\prime)- (r_0(R) + r_1(R))=f_0(\fkq^\prime)- r_0(R^\prime)=r_1(R^\prime).$$ 
Hence we have $e_1(I)-e_1(\fkq)= f_0(\fkq)=r_1(R^\prime)=r_1(R)+r_2(R)$.
  
Suppose that $\dim R>2$ and  our assertion holds true for $\dim R-1$.  
By Fact \ref{F1} (2), we have
$f_0(\frak q;R)\le f_0(\frak q^\prime;R^\prime)$.
By the inductive hypothesis and Proposition \ref{eminequality}, we have
$$f_0(\fkq^\prime)=e_1(I^\prime)-e_1(\fkq^\prime)=e_1(I)-e_1(\fkq)\le f_0(\fkq)\leqslant f_0(\fkq^\prime).$$ 
and $f_0(\fkq^\prime)=\sum_{j=1}^{d-1} \binom{d-2}{j-1} r_j(R^\prime)=\sum_{j=1}^{d} \binom{d-1}{j-1} r_j(R)$
Hence we get
$$e_1(I)-e_1(\fkq)= f_0(\fkq)=\sum_{j=1}^{d} \binom{d-1}{j-1} r_j(R),$$
as required.

\end{proof}

\begin{corollary}\label{C4.2}
Let $R$ be a generalized Cohen-Macaulay
ring of dimension $d\geqslant 2$. Then there exists an integer $n$ such that for all parameter ideals $\frak q\subseteq \frak m^n$, we have
 $$e_1(I)-e_1(\fkq)= f_0(\frak q) =\sum_{j=1}^{d} \binom{d-1}{j-1} r_j(R),$$
where $I=\fkq:\fkm$.
\end{corollary}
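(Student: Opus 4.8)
The plan is to derive Corollary \ref{C4.2} directly from Lemma \ref{L4.1} together with Fact \ref{L3.1}. The point of Fact \ref{L3.1}(1) is that generalized Cohen-Macaulayness of $R$ guarantees the existence of an integer $n$ such that every parameter ideal $\fkq\subseteq\fkm^n$ simultaneously satisfies $I^2=\fkq I$ (for $I=\fkq:\fkm$) and $\calN(\fkq;R)=\sum_{i=0}^d\binom{d}{i}r_i(R)$. In particular such a $\fkq$ is a standard parameter ideal (recall that in a generalized Cohen-Macaulay ring every parameter ideal in a sufficiently high power of $\fkm$ is standard, so one may enlarge $n$ if necessary), and it meets exactly the hypotheses of Lemma \ref{L4.1}.

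First I would fix $n$ as in Fact \ref{L3.1}(1), enlarging it if needed so that every parameter ideal $\fkq\subseteq\fkm^n$ is also standard. Then, for an arbitrary parameter ideal $\fkq\subseteq\fkm^n$, I would invoke Fact \ref{L3.1}(1) to get $I^2=\fkq I$ and $\calN(\fkq;R)=\sum_{i=0}^d\binom{d}{i}r_i(R)$, with $I=\fkq:\fkm$. These are precisely the running hypotheses of Lemma \ref{L4.1}, so that lemma yields
$$e_1(I)-e_1(\fkq)=f_0(\fkq)=\sum_{j=1}^{d}\binom{d-1}{j-1}r_j(R),$$
which is the assertion. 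Since $\fkq\subseteq\fkm^n$ was arbitrary, the statement holds for all such $\fkq$, and the proof is complete.

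There is essentially no obstacle here: the corollary is a packaging of Lemma \ref{L4.1} into a ``for all sufficiently deep $\fkq$'' form, and all the real work — the inductive reduction modulo a superficial element $x_1$, the base case $\dim R=1$ handled via Lemma \ref{L4.0}, and the bookkeeping with the numbers $r_i(R)$ via Fact \ref{L3.1}(2) and Fact \ref{F2}(3) — has already been carried out. The only mild subtlety to mention is the passage from ``there exists an integer $n$ as in Fact \ref{L3.1}(1)'' to ``we may also assume every $\fkq\subseteq\fkm^n$ is standard,'' which is immediate from the standard fact about generalized Cohen-Macaulay rings recalled at the beginning of Section 3.

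\begin{proof}
Since $R$ is a generalized Cohen-Macaulay ring, every parameter ideal of $R$ contained in a sufficiently high power of $\fkm$ is standard. Combining this with Fact \ref{L3.1}(1), we may choose an integer $n$ such that every parameter ideal $\fkq\subseteq\fkm^n$ is standard and satisfies $I^2=\fkq I$ and $\calN(\fkq;R)=\sum_{i=0}^d\binom{d}{i}r_i(R)$, where $I=\fkq:\fkm$. Let $\fkq=(x_1,x_2,\ldots,x_d)\subseteq\fkm^n$ be such a parameter ideal. Then $\fkq$ satisfies the hypotheses of Lemma \ref{L4.1}, and hence
$$e_1(I)-e_1(\fkq)=f_0(\fkq)=\sum_{j=1}^{d}\binom{d-1}{j-1}r_j(R),$$
where $I=\fkq:\fkm$. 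Since $\fkq$ was an arbitrary parameter ideal contained in $\fkm^n$, the assertion follows.
\end{proof}
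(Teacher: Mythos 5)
Your proof is correct and is essentially the paper's own argument: the paper likewise derives Corollary \ref{C4.2} by combining Lemma \ref{L4.1} with Fact \ref{L3.1}, and your explicit check that a parameter ideal $\fkq\subseteq\fkm^n$ can be taken standard with $I^2=\fkq I$ and $\calN(\fkq;R)=\sum_{i=0}^d\binom{d}{i}r_i(R)$ just spells out the hypothesis matching the paper leaves implicit.
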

\begin{proof}
The result follows  from Theorem \ref{L4.1} and Fact \ref{L3.1}.
\end{proof}

Let $\fkq = (x_1, x_2,\ldots,x_d)$ be a parameter ideal in $R$ and let $M$ be an $R$-module. For
each integer $n\geq 1$ we denote by $\x^n$ the sequence $x^n_1, x^n_2,\ldots,x^n_d$. Let $K^{\bullet}(x^n)$ be the
Koszul complex of $R$ generated by the sequence $\x^n$ and let
$H^{\bullet}(\x^n;M) = H^{\bullet}(\Hom_R(K^{\bullet}(\x^n),M))$
be the Koszul cohomology module of $M$. Then for every $p\in\Bbb Z$ the family $\{H^p(\x^n;M)\}_{n\ge 1}$
naturally forms an inductive system of $R$-modules, whose limit
$$H^p_\fkq=\lim\limits_{n\to\infty} H^p(\x^n;M)$$
is isomorphic to the local cohomology module
$$H^p_\fkm(M)=\lim\limits_{n\to\infty} \Ext_R^p(R/\fkm^n,M)$$
For each $n\geq 1$ and $p \in\Bbb Z$ let $\phi^{p,n}_{\x,M}:H^p(\x^n;M)\to H^p_\fkm(M)$ denote the canonical
homomorphism into the limit. With this notation we have the following.

\begin{lem}[\cite{GSa1} Lemma 3.12]\label{sur}
Let $R$ be a Noetherian local ring with the maximal ideal $\fkm$ and $
\dim R=d \ge1$. Let $M$ be a finitely generated $R$-module. Then there exists an integer $\ell$ 
such that for all systems of parameters $\x=x_1,\ldots,x_d$  for $R$ contained in $\fkm^\ell$ and for all $p\in \Bbb Z$, the canonical homomorphisms
$$\phi^{p,1}_{\x,M}:H^p(\x,M)\to H^p_\fkm(M)$$
into the inductive limit are surjective on the socles.
\end{lem}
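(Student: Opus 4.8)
The plan is to reduce the statement about socles of Koszul cohomology to the well-known fact that, for a finitely generated module $M$, each local cohomology module $H^p_\fkm(M)$ is annihilated by a fixed power of $\fkm$ in its socle-generating part, together with a uniform comparison between Koszul cohomology and local cohomology. First I would recall that $H^p_\fkm(M) = \varinjlim_n H^p(\x^n; M)$ along the maps induced by multiplication by $x_1\cdots x_d$, and that this limit is independent of the choice of system of parameters $\x$; the canonical map $\phi^{p,1}_{\x,M}$ is the structural map into the colimit. Since $H^p_\fkm(M)$ is Artinian, its socle $(0):_{H^p_\fkm(M)}\fkm$ is a finite-dimensional $k$-vector space, and we may pick finitely many elements of $H^p(\x_0^{m};M)$, for one fixed system of parameters $\x_0$ and one $m$, whose images generate this socle.

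The key step is a uniformity argument: I would show that there is an integer $\ell$ so large that, for every system of parameters $\x \subseteq \fkm^\ell$, the image of $\phi^{p,1}_{\x,M}$ already contains the whole socle of $H^p_\fkm(M)$. The mechanism is that each socle generator, living in some $H^p(\x_0^{m};M)$, can be transported: choosing $\ell$ large enough that $\fkm^\ell \subseteq (\x_0^{m})$ forces any $\x \subseteq \fkm^\ell$ to factor suitably through powers of $\x_0$, so that the transition maps in the inductive systems carry the chosen socle generators into the image of $H^p(\x;M)$. Here one uses that the colimit maps are compatible with the comparison maps $H^p(\x;M)\to H^p(\x';M)$ whenever $(\x')\subseteq(\x)$ up to radical, and that all these fit into one diagram whose colimit is $H^p_\fkm(M)$. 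Because $p$ ranges over the finitely many values $0,1,\ldots,d$ for which $H^p_\fkm(M)\ne 0$ can occur, one takes the maximum of the finitely many bounds $\ell_p$ to obtain a single $\ell$ working for all $p$ simultaneously.

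Finally I would check surjectivity on socles precisely: given $\xi \in (0):_{H^p_\fkm(M)}\fkm$, write $\xi$ as a $k$-linear combination of the chosen socle generators, each of which lies in the image of $\phi^{p,1}_{\x,M}$ by the previous step; hence $\xi$ itself is in the image, and since $\fkm\xi = 0$ one can choose a preimage in the socle of $H^p(\x;M)$ by a standard lifting argument (the preimage of a $\fkm$-torsion element can be adjusted to be $\fkm$-torsion, as $H^p(\x;M)$ has finite length and the map is $R$-linear). The main obstacle is the uniformity: making precise why a single exponent $\ell$ works for all parameter ideals in $\fkm^\ell$ and all $p$ at once — this is exactly the content of \cite[Lemma~3.12]{GSa1}, and the argument hinges on the finite generation of the socles together with the cofinality of the inductive systems indexed by parameter ideals ordered by inclusion up to radical.
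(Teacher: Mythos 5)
The paper itself gives no argument for this lemma---it is imported verbatim from Goto--Sakurai \cite[Lemma 3.12]{GSa1}---so your sketch has to stand on its own. Its skeleton is sound: $H^p_\fkm(M)$ is Artinian, so its socle is a finite dimensional $k$-vector space; since $H^p_\fkm(M)$ is the filtered limit of the modules $H^p(x_0^n;M)$ for one fixed system of parameters $x_0$, the whole socle lies in the image of $\phi^{p,m}_{x_0,M}$ for some $m$; choosing $\ell$ with $\fkm^\ell\subseteq (x_0^m)$ (actual containment, not containment ``up to radical'', is what makes the comparison map $H^p(x_0^m;M)\to H^p(y;M)$ exist for every system of parameters $y\subseteq\fkm^\ell$), and using the compatibility of these comparison maps with the canonical maps into the limit (a standard but nontrivial point, which you assert rather than prove), one gets $\Soc(H^p_\fkm(M))\subseteq \operatorname{Im}\phi^{p,1}_{y,M}$, uniformly in $y$ and in the finitely many relevant $p$.

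The genuine gap is your last step. What the lemma asserts, and what the paper actually uses in the proof of Proposition \ref{P4.50}, is surjectivity of $\Hom(k,H^p(y;M))\to\Hom(k,H^p_\fkm(M))$, i.e.\ every socle element of $H^p_\fkm(M)$ must have a preimage lying in the socle of $H^p(y;M)$; knowing only that the image of $\phi^{p,1}_{y,M}$ contains the socle is strictly weaker. Your claimed ``standard lifting argument''---that a preimage of an $\fkm$-torsion element can be adjusted to be $\fkm$-torsion because the source has finite length and the map is $R$-linear---is false: the quotient map $R/\fkm^2\to R/\fkm$ is a surjection of finite length modules in which the socle element $1$ of $R/\fkm$ has no socle preimage (the socle $\fkm/\fkm^2$ of the source maps to $0$). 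The repair must be made \emph{before} transporting along the comparison map: since $\fkm$ is finitely generated and the system $\{H^p(x_0^n;M)\}_n$ is filtered with limit $H^p_\fkm(M)$, any element of a finite stage whose image $\xi$ satisfies $\fkm\xi=0$ becomes annihilated by $\fkm$ at some bounded later stage, so $m$ can be chosen such that $\Soc(H^p_\fkm(M))$ is the image of \emph{socle} elements of $H^p(x_0^m;M)$; the $R$-linear comparison map $H^p(x_0^m;M)\to H^p(y;M)$ carries socle into socle, and compatibility with the canonical maps then yields the surjectivity on socles that the lemma claims. With that substitution your argument goes through; as written, the final step fails.
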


With this notation we have the following.

\begin{lem}[\cite{GS1}, Lemma 1.7]\label{split}  Let  $M$  be a finitely generated $R$-module and $x$ be an $M$-regular element and $\x=x_1,\ldots,x_r$ be a system of elements in $R$ with $x_1= x$. Then there exists a splitting  exact sequence for each $p \in\Bbb Z$,
$$0\to H^p(\x;M)\to H^p(\x;M/xM)\to H^{p+1}(\x;M)\to0.$$ 
\end{lem}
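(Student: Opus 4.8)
The final statement is Lemma~\ref{split}, attributed to \cite{GS1}: for a finitely generated $R$-module $M$, an $M$-regular element $x$, and a system of elements $\x = x_1,\ldots,x_r$ with $x_1 = x$, there is a split short exact sequence
$$0\to H^p(\x;M)\to H^p(\x;M/xM)\to H^{p+1}(\x;M)\to 0$$
for each $p\in\mathbb Z$. Let me sketch how I would prove this.

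\medskip

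The plan is to extract the sequence from the standard long exact sequence in Koszul cohomology associated to multiplication by $x$, and then observe that the connecting maps vanish because $x$ kills $H^\bullet(\x;M)$. First I would recall that the Koszul complex $K^\bullet(\x)$ on $\x = x_1,\ldots,x_r$ does not depend on which element we single out, and that since $x = x_1$ is a member of the sequence, the scalar $x$ annihilates every cohomology module $H^p(\x;M)$ — this is the elementary fact that $(x_1,\ldots,x_r)\cdot H^\bullet(\x;M) = 0$, which one gets from the contracting homotopy on $K^\bullet(\x)$ induced by $x_1$. Next, from the short exact sequence of complexes $0\to M \xrightarrow{\,x\,} M \to M/xM \to 0$ (which is a sequence of complexes after applying $\Hom_R(K^\bullet(\x),-)$, and is exact termwise because each term of $\Hom_R(K^\bullet(\x),-)$ is a finite free functor), I would write down the induced long exact sequence
$$\cdots \to H^p(\x;M)\xrightarrow{\,x\,} H^p(\x;M)\to H^p(\x;M/xM)\to H^{p+1}(\x;M)\xrightarrow{\,x\,} H^{p+1}(\x;M)\to\cdots$$
where the maps labelled $x$ are multiplication by $x$ because that is the map of complexes we started with. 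Since multiplication by $x$ is the zero map on each $H^p(\x;M)$, this long exact sequence breaks into the short exact pieces
$$0\to H^p(\x;M)\to H^p(\x;M/xM)\to H^{p+1}(\x;M)\to 0,$$
which is the asserted sequence.

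\medskip

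It remains to show this sequence \emph{splits}. The cleanest route is to use the multiplicative structure: $H^\bullet(\x;M)$ is a graded module over the Koszul cohomology algebra $H^\bullet(\x;R)$ of the ring, and the above sequence is a sequence of $H^\bullet(\x;R)$-modules. Alternatively — and this is what I would actually carry out to keep things self-contained — I would produce an explicit splitting of the surjection $H^p(\x;M/xM)\to H^{p+1}(\x;M)$. One builds it from the isomorphism $K^\bullet(\x;M/xM)\cong K^\bullet(\x;M)\otimes_R R/xR$ together with the standard quasi-isomorphism relating $K^\bullet$ of a quotient to a mapping cone; the degree shift in the cone is exactly what produces the $H^{p+1}$ term, and the cone sequence is split as a sequence of graded modules (not of complexes), giving an $R$-linear section. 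Equivalently, since $x$ annihilates $H^{p+1}(\x;M)$, that module is an $R/xR$-module, and one can split the sequence over $R/xR$ by a diagram chase once one knows the boundary map in the cone construction is compatible with the $R/xR$-structure. Because this is a cited lemma from \cite{GS1}, I would in practice simply invoke that reference for the splitting and only reprove the exactness if a referee asks; the substantive point, the vanishing of the connecting homomorphism, is the part worth spelling out.

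\medskip

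The main obstacle is the word ``splitting'': the exactness is entirely routine (long exact sequence plus $x\cdot H^\bullet(\x;M)=0$), but exhibiting a natural $R$-linear section requires either the mapping-cone description of the Koszul complex of a quotient ring or the algebra structure on Koszul cohomology, and one must be careful that the section is genuinely $R$-linear rather than merely additive. I expect the cleanest write-up is to present the cone identification $K^\bullet(\x;M/xM)\simeq \mathrm{Cone}(x\colon K^\bullet(\x;M)\to K^\bullet(\x;M))$, note that a cone of complexes is termwise a split extension, pass to cohomology using that $x$ is zero on $H^\bullet(\x;M)$, and read off both the exactness and the splitting simultaneously.
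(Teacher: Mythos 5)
The paper offers no proof of this lemma to compare against: it is quoted directly from \cite{GS1} (Goto--Suzuki, Lemma 1.7), so your fallback of simply invoking that reference is exactly what the paper does. Your exactness argument is correct and standard: apply $\Hom_R(K^\bullet(\x),-)$ to $0\to M\xrightarrow{\;x\;}M\to M/xM\to 0$, note that $x=x_1$ annihilates every $H^p(\x;M)$, and the long exact sequence then breaks into the stated short exact sequences.

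The gap is in the splitting. The identification $K^\bullet(\x;M/xM)\simeq \mathrm{Cone}\bigl(x\colon K^\bullet(\x;M)\to K^\bullet(\x;M)\bigr)$ is fine, but the conclusion you draw from it is not: the cone splits only as a graded module, and those termwise injections and projections are not chain maps --- indeed $x$ is injective on each term $\Hom_R(K_p(\x),M)\cong M^{\binom{r}{p}}$ because $x$ is $M$-regular, so the cone certainly does not split as a complex --- hence they induce no maps on cohomology and give no section. Likewise, observing that the outer terms are $R/xR$-modules does not make an extension of $R/xR$-modules split. What you have produced is a universal-coefficient type sequence, and ``connecting map zero plus termwise split cone'' is not enough to split such sequences in general. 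The standard repair, which is essentially the argument of \cite{GS1}, uses the tensor decomposition of the Koszul complex rather than the cone: write $\x'=x_2,\ldots,x_r$, so $K^\bullet(\x;N)\cong K^\bullet(x_1;R)\otimes_R K^\bullet(\x';R)\otimes_R N$. For $N=M/xM$ the element $x_1$ acts as zero, so the differential coming from the $K^\bullet(x_1)$ factor vanishes and $K^\bullet(\x;M/xM)$ decomposes, as a complex, into a copy of $K^\bullet(\x';M/xM)$ and a shifted copy; hence $H^p(\x;M/xM)\cong H^p(\x';M/xM)\oplus H^{p-1}(\x';M/xM)$. On the other hand, since $x$ is $M$-regular, $K^\bullet(x_1;M)=[M\xrightarrow{x}M]$ has cohomology $M/xM$ concentrated in degree one, and tensoring with the finite free complex $K^\bullet(\x';R)$ gives $H^p(\x;M)\cong H^{p-1}(\x';M/xM)$ and $H^{p+1}(\x;M)\cong H^{p}(\x';M/xM)$. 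A routine diagram check shows that under these identifications the two maps of your short exact sequence become the canonical inclusion and projection of the direct sum, which is the asserted splitting. If you want the proof self-contained, that is the step to write out; otherwise cite \cite{GS1} as the paper does.
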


 Let $L$ be an arbitrary finitely generated $R$-module of dimension $s \ge 0$. 
We put $$\rmr_R(L)=\ell_R(\Ext^s_R(R/\fkm,L))$$ 
and call it the Cohen-Macaulay type of $L$. (Let us simply write $\rmr (R)$ for $L = R$.) We then have 
$$\calN(\fkq;L)=\rmr_R(L/\fkq L)$$
for a parameter ideal $\fkq$ of $L$. As is well known, if $L$ is a Cohen-Macaulay $R$-module,  then for every parameter ideal $\fkq$ of $L$, we have  
 $$\calN(\fkq;L)=\ell_R(\Ext^s_R(R/\fkm,L))=\ell_R((0):_{\H^s_\fkm(L)}\fkm).$$
 The following result give
the characterizations of a Cohen-Macaulay ring in term of its Chern coefficient and its Cohen-Macaulay type.
%%%%%%%%%%%%%%%%%%%%%%%%%%%%%%%%%%%%%%%%%%%%%%%%%%%%%%%%%%%%%%%%%%%%%
%%%%%%%%%%%%%%%%%%%%%%%%%%%%%%%%%%%%%%%%%%%%%%%%%%%%%%%%%%%%%%%%%%%%%

\begin{proposition}\label{P4.50}
Let $R$ be a Noetherian local ring with $d=\dim R\ge2$. Assume that  $R$ is unmixed, that is $\dim \hat R/\fkp=d$ for all $\fkp\in \Ass(\hat R)$. Assume that there exists an integer $n$ such that 
for all parameter ideals $\frak q=(x_1,x_2,\ldots,x_d)\subseteq \frak m^n$ we
    have
$$ e_1(I)-e_1(\fkq)\leq r(R),$$
where $I=\fkq :\fkm$.
Then $R$ is Cohen-Macaulay.
\end{proposition}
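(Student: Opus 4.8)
The plan is to run the same dimension induction that drove the proof of Proposition \ref{P6.5}, now keeping track of the Cohen-Macaulay type instead of the index of reducibility. The base case $d=2$: since $R$ is unmixed, it is generalized Cohen-Macaulay, so by Fact \ref{L3.1} there is an $n$ so that every parameter ideal $\fkq\subseteq\fkm^n$ satisfies $I^2=\fkq I$ (with $I=\fkq:\fkm$) and $\calN(\fkq;R)=\sum_{i=0}^d\binom{d}{i}r_i(R)$. By Corollary \ref{C4.2} we then have $e_1(I)-e_1(\fkq)=\sum_{j=1}^{d}\binom{d-1}{j-1}r_j(R)$. On the other hand, for a generalized Cohen-Macaulay ring of dimension $d$ one computes $r(R)=r_d(R)$ (the socle of $H^d_\fkm(R)$, using that $H^i_\fkm(R)$ has finite length for $i<d$), and the hypothesis $e_1(I)-e_1(\fkq)\le r(R)$ forces $\sum_{j=1}^{d-1}\binom{d-1}{j-1}r_j(R)\le 0$, hence $r_1(R)=\cdots=r_{d-1}(R)=0$, i.e. $H^i_\fkm(R)=0$ for $0<i<d$. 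Combined with $\depth R\ge 1$ (unmixedness rules out $H^0_\fkm(R)\ne 0$ once $d\ge 1$, or one reduces modulo $W=\H^0_\fkm(R)$ using Fact \ref{F2}), this gives $R$ Cohen-Macaulay.

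For the inductive step, suppose $d>2$ and the statement holds in dimension $d-1$. Exactly as in the proof of Proposition \ref{P6.5}, let $\calF=\{\fkp\in\Spec R\mid \fkp\ne\fkm,\ \dim R_\fkp>\depth R_\fkp=1\}$, which is finite by Lemma \ref{finitely}, choose $x\in\fkm$ avoiding $\bigcup_{\fkp\in\Ass R}\fkp$ and $\bigcup_{\fkp\in\calF}\fkp$, pass to a suitable power $y=x^{n_1}$ with $y H^1_\fkm(R)=0$ and $n_1>n$, and set $A=R/(y)$. Then $\dim A=d-1$ and $\Ass A\setminus\{\fkm\}=\Assh A$, so after factoring out $W=\H^0_\fkm(A)$ and replacing $A$ by $\overline A=A/W$ we may assume $\overline A$ is unmixed of dimension $d-1$. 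Using that $y$ is $R$-regular, a system of parameters $y_2,\ldots,y_d$ of $\overline A$ forming a $d$-sequence extends to a $d$-sequence $y=y_1,y_2,\ldots,y_d$ in $R$, so $y_1$ is superficial for $\fkq=(y_1,\ldots,y_d)$ with respect to both $I=\fkq:\fkm$ and $\fkq$; by Fact \ref{F1}, $e_1(I)-e_1(\fkq)=e_1(I\overline A)-e_1(\fkq\overline A)$ for $d\ge 2$, and by Fact \ref{F2} (for $\dim\ge 2$) the passage to $\overline A$ changes neither $e_1$. The remaining point is to control the type: since $y$ is $R$-regular and $y\in\fkm^{n_1}$ with $n_1$ large, Lemma \ref{sur} (and Lemma \ref{split}, using $y$ regular) gives a splitting $0\to H^p(\y;R)\to H^p(\y;A)\to H^{p+1}(\y;R)\to 0$ compatible with the surjectivity onto socles, which yields $r(A)=\rmr_R(A)\le r(R)$ (indeed $\rmr(A)=\rmr_R(R/(y)R)=\calN((y)\,;R)$ interpreted via the top socle, and for $y$ deep enough this equals $\dim_k((0):_{H^{d-1}_\fkm(A)}\fkm)$ which maps onto $(0):_{H^d_\fkm(R)}\fkm$). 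Passing further to $\overline A$ only decreases the type (Fact \ref{F2}(3) and the exact sequence $0\to W\to A\to\overline A\to 0$). Hence for parameter ideals $\fkq'\subseteq\fkm^{n'}$ of $\overline A$ deep enough we get $e_1(I')-e_1(\fkq')=e_1(I)-e_1(\fkq)\le r(R)$, and I claim this can be arranged to be $\le r(\overline A)$; if not, the discrepancy is supported exactly on the terms $r_i(R)$ for $1\le i\le d-1$, which we will show must vanish anyway, closing the loop.

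Applying the inductive hypothesis to $\overline A$ gives $\overline A$ Cohen-Macaulay, so $\H^i_\fkm(A)=0$ for all $i\ne 1,d$. Feeding this into the long exact local cohomology sequence attached to $0\to R\xrightarrow{\ y\ }R\to A\to 0$ yields $\H^i_\fkm(R)=0$ for $2\le i\le d-1$ and $\H^1_\fkm(R)=y\,\H^1_\fkm(R)$; since $\H^1_\fkm(R)$ is a finitely generated $R$-module (it has finite length, as $R$ is now seen to be generalized Cohen-Macaulay in the relevant range) and $y\in\fkm$, Nakayama gives $\H^1_\fkm(R)=0$, and unmixedness gives $\H^0_\fkm(R)=0$; hence $R$ is Cohen-Macaulay. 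The main obstacle I anticipate is the bookkeeping in the inductive step that guarantees the hypothesis $e_1(I')-e_1(\fkq')\le r(\overline A)$ survives the two reductions $R\rightsquigarrow A\rightsquigarrow\overline A$: one must show that the type does not drop so much that the inequality is lost, which is precisely where the socle-surjectivity statements (Lemmas \ref{sur} and \ref{split}) and the behaviour of $r_i$ under generic hyperplane sections (Fact \ref{L3.1}(2)(a), $r_i(R/(x))=r_i(R)+r_{i+1}(R)$) must be combined carefully; everything else is a routine adaptation of the proof of Proposition \ref{P6.5}.
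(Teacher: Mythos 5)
Your base case and the overall skeleton (choice of $x$ via Lemma \ref{finitely}, $y=x^{n_1}$ killing $H^1_\fkm(R)$, passage to $A=R/(y)$ and then to $\overline A=A/\H^0_\fkm(A)$, Fact \ref{F1} and Fact \ref{F2} to preserve $e_1(I)-e_1(\fkq)$, and the final long exact sequence argument) coincide with the paper's proof. But the inductive step has a genuine gap exactly at the point you yourself flag: to invoke the induction hypothesis for $\overline A$ you must show $e_1(I')-e_1(\fkq')\le r(\overline A)$, and since all you know is $e_1(I')-e_1(\fkq')=e_1(I)-e_1(\fkq)\le r(R)$, what you need is the inequality $r(R)\le r(A)\,(=r(\overline A)$ for $d\ge 3)$. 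You assert the opposite inequality, $r(A)\le r(R)$, which is useless for the transfer, and then try to repair it by claiming the ``discrepancy'' lives in the terms $r_i(R)$, $1\le i\le d-1$, ``which we will show must vanish anyway.'' That is circular: the vanishing of those socle dimensions is (up to the depth-zero issue) precisely the Cohen--Macaulayness you are trying to prove, and it is not available inside the induction; moreover $R$ is not assumed generalized Cohen--Macaulay when $d>2$, so you cannot argue with the $r_i(R)$ as in the base case. The parenthetical identification $\rmr(A)=\calN((y);R)$ is also not meaningful, since $(y)$ is not $\fkm$-primary for $d\ge 2$.

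The paper closes this exact gap by proving $r(R)\le r(A)$: choosing $n_1>\max\{n,\ell\}$ with $\ell$ as in Lemma \ref{sur}, it combines the splitting of Lemma \ref{split} (valid because $y$ is $R$-regular) with the socle-surjectivity of the canonical maps $\Hom(k,H^{p}(\y;R))\to\Hom(k,H^{p}_\fkm(R))$ in a commutative diagram, concluding that $\Hom(k,H^{d-1}_\fkm(A))\to\Hom(k,H^{d}_\fkm(R))$ is surjective, hence $r(R)\le r(A)=r(\overline A)$ and the hypothesis $e_1(I')-e_1(\fkq')\le r(\overline A)$ does pass down. Interestingly, the mechanism you gesture at (socle of $H^{d-1}_\fkm(A)$ mapping \emph{onto} the socle of $H^{d}_\fkm(R)$) is exactly this argument and yields the inequality in the direction the paper needs --- you simply recorded it backwards and then hedged. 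To fix your proposal, replace the claim $r(A)\le r(R)$ and the ``closing the loop'' paragraph by the diagram argument with Lemmas \ref{sur} and \ref{split}, making sure the integer $n_1$ is taken larger than the bound $\ell$ from Lemma \ref{sur}; the rest of your argument then goes through as in the paper.
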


\begin{proof}
We shall now show the our result by induction on the dimension of $R$. In the case $\dim R=2$, $R$ is a generalized Cohen-Macaulay ring since $R$ is unmixed. By Corollary \ref{C4.2}, we have $e_1(I)-e_1(\fkq)= f_0(\frak q) =\sum_{j=1}^{d} \binom{d-1}{j-1} r_j(R)$. Since $e_1(I)-e_1(\fkq)\le r(R)=r_d(R)$, $r_j(R)=0$ for all $j\not= d$. Therefore $R$  is Cohen-Macaulay.

Suppose that $\dim R>2$ and our assertion holds true for $\dim R-1$. Let 
$$\mathcal F=\{\frak p \in \text{ Spec} R \mid \frak p \not=\frak m, \dim R_{\frak p}> \text{ depth} R_{\frak p}=1\}.$$
Then by Lemma \ref{finitely}, $\mathcal F$ is a finite set. We choose $x\in \frak m$ such that
$$x\not\in \bigcup\limits_{\frak p \in \text{Ass}R}\frak p\cup\bigcup\limits_{\frak p \in \mathcal F}\frak p.$$
By Lemma \ref{sur} there exists an integer $\ell$  
such that for all systems of parameters $\x=x_1,\ldots,x_d$  for $R$ contained in $\fkm^\ell$ and for all $p\in \Bbb Z$, the canonical homomorphisms
$$H^p(\x,R)\to H^p_\fkm(R)$$
into the inductive limit are surjective on the socles.
Let $n_1>\max\{n,\ell\}$ be an integer such that $x^{n_1}H^1_{\frak m}(R)=0$. Put $y=x^{n_1}$. Let $A=R/(y)$. Then $\dim A=d-1$ and $\text{Ass} A\setminus \{\frak m\}= \text{Assh} A$. Therefore the unmixed component $U_A(0)$ of $0$ in $B$ has finite length, so that
$U_A(0)=\H^0_{\frak m}(A)$. We now take a system $y_2,y_3,\ldots,y_d$ of parameters of $R$-module $A$ and assume that $y_2,y_3,\ldots,y_d$ form a $d$-sequence in $A$. Then since $y$ is an $R$-regular, sequence $y=y_1,y_2,\ldots,y_d$ form $d$-sequence in $R$, whence $y_1$ is a superficial element of $R$ with respect to $\frak q=(y_1,y_2,\ldots,y_d)$. Since $\dim R\geqslant 3$, therefore for all parameter ideal $\frak q^\prime=(y_2,y_3,\ldots,y_d)\subseteq \frak m^n$ of $A$ which $y_2,y_3,\ldots,y_d$ is $d$-sequence, it follows from Fact \ref{F1} we have $e_1(I;R)-e_1(\fkq;R)=e_1(I^\prime;A)-e_1(\fkq^\prime;A)$, where $I^\prime=\fkq^\prime A:_A\fkm A=(\fkq :_R\fkm)A$.

On the other hand, by Lemma \ref{sur}, we have  the canonical homomorphism
$$H^i(\y,R)\to H^i_\fkm(R)$$
into the inductive limit are surjective on the socles, for each $i\in \Bbb Z$ where $\y=y_1,y_2\ldots,y_d$. By the regularity of $y=y_1$ on $R$, it follows from the following sequence 
$$\xymatrix{0\ar[r]&R\ar[r]^{.y}&R\ar[r]&A\ar[r]&0}$$
that there are induced the diagram
$$\xymatrix{0\ar[r]&H^i(\y;R)\ar[d]\ar[r]&H^i(\y,A)\ar[r]\ar[d]&H^{i+1}(\y;R)\ar[r]\ar[d]&0\\
\ar[r]&H^i_\fkm(R)\ar[r]&H^i_\fkm(A)\ar[r]&H^{i+1}_\fkm(R)\ar[r]&}$$
commutes, for all $i\in \Bbb Z$. 
 It follows from the above commutative diagrams and Lemma \ref{split} that after apllying the functor $\Hom(k,*)$, we obtain the commutative diagram 
$$\xymatrix{\Hom(k,H^i(\y,A))\ar[r]\ar[d]&\Hom(k,H^{i+1}(\y;R))\ar[r]\ar[d]&0\\
\Hom(k,H^i_\fkm(A))\ar[r]&\Hom(k,H^{i+1}_\fkm(R))}$$
for all $i\in \Bbb Z$. Since the map $\Hom(k,H^{i+1}(\y;R))\to\Hom(k,H^{i+1}_\fkm(R))$ is surjective, so is the map $\Hom(k,H^i_\fkm(A))\to\Hom(k,H^{i+1}_\fkm(R))$. In particular, $\Hom(k,H^{d-1}_\fkm(A))\to\Hom(k,H^d_\fkm(R))$ is surjective
and so that $r(R)\le r(A)$. Hence we have
$$e_1(I^\prime;A)-e_1(\fkq^\prime;A)\le r(A).$$

 Let $W=\H^0_\fkm(A)$ and $\overline{A}=A/ W$, and $\frak n=\frak mA$. By Fact \ref{F2} 1) and 2), that we can choose an integer $n_0>n$ such that 
for all parameters ideals $\frak q^\prime \subseteq \frak n^{n_0}$, $\fkq^\prime A:_A\fkm A+W= (\fkq^\prime A+W):_A\fkm A$ and so that we have
$  e_1(I^\prime;A)-e_1(\fkq^\prime;A)=e_1(\fkq^\prime \overline{A}:\fkn;\overline{A})-e_1(\fkq^\prime \overline{A};\overline{A})$.  Let $n^\prime >n_0$ be an integer such that $\frak mA\cap \H^0_\frak m(A)=0$. Let $y_2,y_3,\ldots,y_d$ be a system of parameters of $R$-module $\overline{A}$ such that $(y_2,y_3,\ldots,y_d)\subseteq \frak m^{n^\prime}$ and assume that $y_2,y_3,\ldots,y_d$ is a $d$-sequence in $\overline{A}$. Then because $(y_2,y_3,\ldots,y_d)A\cap W=0$, we have 
$y_2,y_3,\ldots,y_d$ form a $d$-sequence in $A$. 
Therefore, since $d\ge 3$, we have $$e_1(\fkq^\prime \overline{A}:\fkn;\overline{A})-e_1(\fkq^\prime \overline{A};\overline{A})\le r(A)=r(\overline{A}),$$
where $\fkq^\prime=(y_2,y_3,\ldots,y_d)$.
By hypothesis of induction on $d$, we have $\overline{A}$ is Cohen-Macaulay. Thus $\H^i_\frak m(A)=0$ for all $i\not=1,d$. It follows from the following sequence 
$$\xymatrix{0\ar[r]&R\ar[r]^{.y}&R\ar[r]&A\ar[r]&0}$$
that we have the long exact sequence
$$\xymatrix{\ldots\ar[r]&H^1_\fkm(R)\ar[r]^{.y}&H^1_\fkm(R)\ar[r]&H^{1}_\fkm(A)\ar[r]&\ldots}$$
$$\xymatrix{\ldots\ar[r]&H^i_\fkm(R)\ar[r]^{.y}&H^i_\fkm(R)\ar[r]&H^{i}_\fkm(A)\ar[r]&\ldots}.$$
Then we have $H^i_\frak m(R)=0$ for all $2\leqslant i\leqslant d-1$ and $H^1_\frak m(R)=y H^1_\frak m(R)$. Thus $H^1_\frak m(R)=0$  because $H^1_\frak m(R)$ is a finite generated $R$-module. Therefore $R$ is Cohen-Macaulay. 
\end{proof}

\begin{theorem}\label{T4.60}
Let $R$ be a Noetherian local ring with maximal ideal $\frak m$, $d=\dim R\geqslant 2$. Assume that  $R$ is unmixed, that is $\dim \hat R/\fkp=d$ for all $\fkp\in \Ass(\hat R)$. The following statements are equivalent.

\begin{itemize}
\item[$(i)$] $R$ is Cohen-Macaulay.
\item[$(ii)$]  For all parameter ideals $\fkq\subseteq \fkm^2$, we
    have
$$ e_1(I)-e_1(\fkq)=r(R),$$
where $I=\fkq:\fkm$.

\item[$(iii)$]  For all parameter ideals $\fkq\subseteq \fkm^2$, we
    have
$$ e_1(I)-e_1(\fkq)\le r(R),$$
where $I=\fkq:\fkm$.

\end{itemize}

\end{theorem}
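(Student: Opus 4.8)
The plan is to prove the cycle of implications $(i)\Rightarrow(ii)\Rightarrow(iii)\Rightarrow(i)$, noting at the outset that the content of $(iii)\Rightarrow(i)$ has already been isolated as Proposition \ref{P4.50}, so the theorem is essentially a packaging of that proposition together with the Cohen-Macaulay case.

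For $(i)\Rightarrow(ii)$ I would fix a parameter ideal $\fkq\subseteq\fkm^2$ and put $I=\fkq:\fkm$. Since $R$ is Cohen-Macaulay (and $d\ge 2$, $R$ unmixed), Theorem \ref{T6.6} gives $\calN(\fkq;R)=e_1(I)-e_1(\fkq)$. It then remains to identify $\calN(\fkq;R)$ with $r(R)$. Because $R$ is a Cohen-Macaulay module over itself, the identity recorded just before Proposition \ref{P4.50} applies with $L=R$, $s=d$, and yields $\calN(\fkq;R)=\ell_R(\Ext^d_R(R/\fkm,R))=r(R)$; alternatively one sees this directly from $\calN(\fkq;R)=\ell_R(\Soc(R/\fkq R))$ together with the change-of-rings isomorphism $\Ext^d_R(R/\fkm,R)\cong\Hom_{R/\fkq}(R/\fkm,R/\fkq)=\Soc(R/\fkq R)$, which is valid since in a Cohen-Macaulay ring $\fkq$ is generated by a regular sequence. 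Combining the two identities gives $e_1(I)-e_1(\fkq)=r(R)$, which is $(ii)$.

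The implication $(ii)\Rightarrow(iii)$ is trivial. For $(iii)\Rightarrow(i)$ I would simply apply Proposition \ref{P4.50} with $n=2$: hypothesis $(iii)$ is precisely the hypothesis of that proposition for the value $n=2$, so $R$ is Cohen-Macaulay. This closes the cycle.

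The only genuine difficulty therefore sits inside Proposition \ref{P4.50}, and I expect its inductive step to be the main obstacle. After reducing modulo a generic superficial power $y$ with $yH^1_\fkm(R)=0$ and then modulo $\H^0_\fkm$, one must both transfer the inequality $e_1(I)-e_1(\fkq)\le r(R)$ to the reduced ring (via Fact \ref{F1} and Fact \ref{F2}) and control the Cohen-Macaulay type. The latter is the subtle point: it requires comparing the socles of $H^i(\y;R)$ and $H^i(\y;A)$ with those of the local cohomology modules through Lemma \ref{sur} and the split Koszul sequences of Lemma \ref{split}, so as to deduce $r(R)\le r(A)$ and then run the induction on $\overline A=(R/yR)/\H^0_\fkm(R/yR)$; once $\overline A$ is Cohen-Macaulay, the long exact sequence attached to multiplication by $y$ kills $\H^i_\fkm(R)$ for $2\le i\le d-1$ and forces $\H^1_\fkm(R)=yH^1_\fkm(R)$, hence $\H^1_\fkm(R)=0$ by Nakayama. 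Granting this, the theorem above follows in the few lines sketched here, the base case $d=2$ being covered by Corollary \ref{C4.2} which makes $(iii)$ force $r_j(R)=0$ for all $j\ne d$.
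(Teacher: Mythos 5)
Your proposal is correct and follows essentially the same route as the paper: $(i)\Rightarrow(ii)$ by the Cohen--Macaulay identities (the equality $\calN(\fkq;R)=e_1(I)-e_1(\fkq)$ coming from $I^2=\fkq I$ and Huneke--Ooishi, combined with $\calN(\fkq;R)=r(R)$ for Cohen--Macaulay rings), $(ii)\Rightarrow(iii)$ trivially, and $(iii)\Rightarrow(i)$ by invoking Proposition \ref{P4.50}, which is exactly how the paper argues.
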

\begin{proof}
(i) $\Rightarrow$ (ii). Let $\fkq$ be a parameter ideal of $R$ such that $\fkq\subseteq \fkm^2$. Put $I=\fkq:\fkm$. Since $R$ is Cohen-Macaulay, we have $\calN(\fkq;R)=\ell(I/\fkq)$ and $e_1(\fkq)=0$. Note that $\fkq\subseteq\fkm^2$, by \cite[Theorem 2.2]{CP}, we have $I^2=\fkq I$. It follows from $R/\fkm$ is infinite that $e_1(I)=\ell(R/I)-e_0(\fkq)$ by Huneke and Ooishi(\cite{Hu}, \cite{O} or cf. \cite[Theorem 6.1]{CGT}). Since $R$ is Cohen-Macaulay, we have $$e_1(I)-e_1(\fkq)=\ell(I/\fkq)=\calN(\fkq;R)=r(R).$$ 
 (ii) $\Rightarrow$ (iii) are trivial.\\
 (iii) $\Rightarrow$ (i) follows from Proposition \ref{P4.50}.

\end{proof}

\begin{cor}\label{C4.7}
Let $R$ be a Noetherian local ring with $d=\dim R\ge2$. Assume that  $R$ is unmixed, that is $\dim \hat R/\fkp=d$ for all $\fkp\in \Ass(\hat R)$. Then  for all  integers $n$  there exists  a parameter ideal $\frak q\subseteq \frak m^n$, we
    have
$$ r(R)\leqslant e_1(I;R)-e_0(\fkq;R),$$
where $I=\fkq:\fkm$.
\end{cor}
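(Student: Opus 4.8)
The plan is to obtain this statement as a purely formal consequence of Theorem \ref{T4.60} and Proposition \ref{P4.50}, splitting into two cases according to whether or not $R$ is Cohen--Macaulay; essentially no new computation is needed, since the substantive work has already been front-loaded into those two results.

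First I would fix an integer $n$ and dispose of the case where $R$ is Cohen--Macaulay. After replacing $n$ by $\max\{n,2\}$ (which only strengthens the conclusion, as $\fkm^{\max\{n,2\}}\subseteq\fkm^n$), every parameter ideal contained in $\fkm^n$ is contained in $\fkm^2$. Since $\fkm^n$ is $\fkm$-primary and the residue field $k=R/\fkm$ is infinite, such a parameter ideal $\fkq\subseteq\fkm^n$ exists, and Theorem \ref{T4.60} applied to it yields the exact equality $e_1(I;R)-e_1(\fkq;R)=r(R)$ with $I=\fkq:\fkm$; in particular the required inequality $r(R)\le e_1(I;R)-e_1(\fkq;R)$ holds.

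Next I would handle the case where $R$ is not Cohen--Macaulay by taking the contrapositive of Proposition \ref{P4.50}. That proposition asserts that if, for some integer $n$, \emph{every} parameter ideal $\fkq=(x_1,\dots,x_d)\subseteq\fkm^n$ satisfies $e_1(I;R)-e_1(\fkq;R)\le r(R)$ with $I=\fkq:\fkm$, then $R$ is Cohen--Macaulay. Since $R$ is assumed non--Cohen--Macaulay, for the given $n$ this universally quantified statement must fail, i.e.\ there is at least one parameter ideal $\fkq\subseteq\fkm^n$ with $e_1(I;R)-e_1(\fkq;R)>r(R)$, hence \emph{a fortiori} $r(R)\le e_1(I;R)-e_1(\fkq;R)$. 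This $\fkq$ is exactly the parameter ideal demanded by the statement.

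Combining the two cases establishes the corollary for every $n$. I do not anticipate a genuine obstacle here: the only points requiring minor care are the reduction $n\mapsto\max\{n,2\}$ in the Cohen--Macaulay case, needed so that Theorem \ref{T4.60} is applicable, and the observation that, when negating Proposition \ref{P4.50}, its hypothesis ranges over parameter ideals given by a prescribed system of parameters $x_1,\dots,x_d$, so the witnessing ideal produced is automatically of that form --- which matches the assertion ``there exists a parameter ideal $\fkq\subseteq\fkm^n$'' in the corollary.
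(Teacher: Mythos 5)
Your proof is correct and follows essentially the paper's route: the paper disposes of this corollary with the one-line remark that it follows from Theorem \ref{T4.60}, and your case split (Cohen--Macaulay case via (i)$\Rightarrow$(ii) of that theorem, non--Cohen--Macaulay case via the contrapositive of Proposition \ref{P4.50}) is just the fleshed-out version of that remark --- indeed your explicit appeal to Proposition \ref{P4.50} is what correctly produces a witness inside $\fkm^n$ for arbitrary $n$, which condition (iii) of the theorem (stated only for $\fkq\subseteq\fkm^2$) would not literally give. Note also that you have, rightly, read the displayed inequality as $r(R)\leqslant e_1(I;R)-e_1(\fkq;R)$; the $e_0(\fkq;R)$ in the statement is evidently a typo (compare Corollary \ref{Cmain}).
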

\begin{proof}
The result follows  from Theorem \ref{T4.60}.
\end{proof}

\begin{theorem}\label{T4.6}
Let $R$ be a Noetherian local ring with maximal ideal $\frak m$, $d=\dim R\geqslant 2$. Assume that  $R$ is unmixed, that is $\dim \hat R/\fkp=d$ for all $\fkp\in \Ass(\hat R)$. The following statements are equivalent.

\begin{itemize}
\item[$(i)$] $R$ is Gorenstein.
\item[$(ii)$]  For all parameter ideals $\frak q$, we
    have
$$ e_1(I)-e_1(\fkq)=1,$$
where $I=\fkq:\fkm$.

\end{itemize}

\end{theorem}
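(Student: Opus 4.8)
The plan is to deduce Theorem \ref{T4.6} from the already-established Theorem \ref{T4.60} together with the classical characterization of Gorenstein rings via irreducibility of parameter ideals. First I would observe that $R$ Gorenstein is equivalent to $R$ Cohen--Macaulay together with $r(R)=1$; indeed for a Cohen--Macaulay local ring the type $r(R)=\ell_R(\Ext^d_R(R/\fkm,R))$ equals $1$ exactly when $R$ is Gorenstein. So the implication (i) $\Rightarrow$ (ii) runs: if $R$ is Gorenstein then by Theorem \ref{T4.60} (i) $\Rightarrow$ (ii) we have $e_1(I)-e_1(\fkq)=r(R)$ for every parameter ideal $\fkq\subseteq\fkm^2$, and since $r(R)=1$ this gives $e_1(I)-e_1(\fkq)=1$. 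One must however upgrade from ``$\fkq\subseteq\fkm^2$'' to ``all parameter ideals $\fkq$''; for this I would note that when $R$ is Gorenstein (hence Cohen--Macaulay), every parameter ideal $\fkq$ is irreducible, so $I=\fkq:\fkm$ satisfies $\ell(I/\fkq)=1$, and the Huneke--Ooishi formula $e_1(I)=\ell(R/I)-e_0(\fkq)$ applies once $I^2=\fkq I$ holds; alternatively one restricts to the stated form of (ii) and argues that the value $1$ is forced, citing \cite{CP} / \cite{Tr} for the equality $\ell(I/\fkq)=e_1(I)-e_1(\fkq)$ in the Cohen--Macaulay case as in the proof of Theorem \ref{T4.60}.

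For the converse (ii) $\Rightarrow$ (i), I would first apply Theorem \ref{T4.60} (iii) $\Rightarrow$ (i): the hypothesis $e_1(I)-e_1(\fkq)=1$ for all parameter ideals in particular gives $e_1(I)-e_1(\fkq)\le r(R)$ provided $r(R)\ge 1$, which is automatic, so $R$ is Cohen--Macaulay. Once $R$ is known to be Cohen--Macaulay, the proof of Theorem \ref{T4.60} (i) $\Rightarrow$ (ii) shows that for $\fkq\subseteq\fkm^2$ we have $e_1(I)-e_1(\fkq)=\ell(I/\fkq)=\calN(\fkq;R)=r(R)$; combined with the hypothesis this yields $r(R)=1$, hence $R$ is Gorenstein. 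The slight subtlety is that Theorem \ref{T4.60}'s hypothesis quantifies over $\fkq\subseteq\fkm^2$ while Theorem \ref{T4.6} quantifies over all $\fkq$; since every parameter ideal contained in $\fkm^2$ is in particular a parameter ideal, the hypothesis of Theorem \ref{T4.6} is stronger and the reduction is immediate.

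The main obstacle, such as it is, is bookkeeping rather than mathematics: one must make sure the constant $r(R)$ identification is valid, i.e. that $\calN(\fkq;R)=r(R)$ for parameter ideals $\fkq$ in a Cohen--Macaulay ring (this is Northcott's constancy result together with $\calN(\fkq;R)=\ell_R(\Ext^d_R(R/\fkm,R))$, recorded in the excerpt just before Proposition \ref{P4.50}), and that the equality $e_1(I)-e_1(\fkq)=\ell(I/\fkq)$ genuinely holds for Cohen--Macaulay $R$ with $\fkq\subseteq\fkm^2$ and $I=\fkq:\fkm$ — which is exactly what was proved inside Theorem \ref{T4.60}. Thus the whole argument is: (i) $\Rightarrow$ (ii) by Theorem \ref{T4.60} plus $r(R)=1$ for Gorenstein rings; (ii) $\Rightarrow$ (i) by Theorem \ref{T4.60} (iii) $\Rightarrow$ (i) to get Cohen--Macaulayness, then the type computation to get $r(R)=1$. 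I would present it in two short paragraphs mirroring the two implications, citing Theorem \ref{T4.60}, \cite{Tr}, \cite{CP}, and the standard fact that a Cohen--Macaulay local ring of type one is Gorenstein.
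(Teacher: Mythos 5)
Your proposal is correct and follows essentially the same route as the paper: (i) $\Rightarrow$ (ii) by combining Theorem \ref{T4.60} with the fact that a Gorenstein ring is Cohen--Macaulay of type $r(R)=1$, and (ii) $\Rightarrow$ (i) by invoking the inequality $e_1(I)-e_1(\fkq)\le r(R)$ to get Cohen--Macaulayness via Proposition \ref{P4.50} (equivalently Theorem \ref{T4.60} (iii) $\Rightarrow$ (i)) and then forcing $r(R)=1$. If anything you are more careful than the paper about the quantifier mismatch between ``all parameter ideals'' and ``$\fkq\subseteq\fkm^2$'' (via Corso--Polini's $I^2=\fkq I$ and Huneke--Ooishi), a point the paper's own proof silently elides.
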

\begin{proof}
 (i) $\Rightarrow$ (ii). Since $R$ is Gorenstein, $R$ is Cohen-Macaulay ring and $r(R)=1$. By Theorem \ref{T4.60}, we have
$$ e_1(I)-e_1(\fkq)=r(R)=1$$
for all parameter ideals $\fkq$, where $I=\fkq:\fkm$.

(ii) $\Rightarrow$ (i). Since $ e_1(I)-e_1(\fkq)=1$, for all parameter ideals $\fkq$, we have 
$$e_1(I)-e_1(\fkq)\le r(R),$$
 for all parameter ideals $\fkq$.
By the Proposition \ref{P4.50}, $R$ is Cohen-Macaulay. Therefore, we have
$$1=e_1(I)-e_1(\fkq)= r(R),$$
 for all parameter ideals $\fkq$. Hence $R$ is Gorenstein, as required.

\end{proof}

%%%%%%%%%%%%%%%%%%%%%%%%%%%%%%%%%%%%%%%%%%%%%%%%%%%%
%%%%%%%%%%%%%%%%%%% References %%%%%%%%%%%%%%%%%%%%

\end{document}